\newtheorem{theorem}{Theorem}[section]
\newtheorem{mtheorem}{Theorem}
\newtheorem{proposition}[theorem]{Proposition}
\newtheorem{lemma}[theorem]{Lemma}
\newtheorem{corollary}[theorem]{Corollary}
\theoremstyle{remark}
\newcommand{\rmap}{\longrightarrow}
\newcommand{\diffto}{\xrightarrow{\raisebox{-0.2 em}[0pt][0pt]{\smash{\ensuremath{\sim}}}}}
\begin{document}
\title{Deformations of the Lie-Poisson sphere of a compact semisimple Lie algebra}
\author{Ioan M\u{a}rcu\cb{t}}
\address{Depart. of Math., Utrecht University, 3508 TA Utrecht, The Netherlands}
\address{\emph{Current address:} University of Illinois at Urbana-Champaign, Urbana, IL 61801 USA}
\email{marcut@illinois.edu}
\begin{abstract}
A compact semisimple Lie algebra $\mathfrak{g}$ induces a Poisson structure $\pi_{\mathbb{S}}$ on the unit sphere
$\mathbb{S}(\mathfrak{g}^*)$ in $\mathfrak{g}^*$. We compute the moduli space of Poisson structures on
$\mathbb{S}(\mathfrak{g}^*)$ around $\pi_{\mathbb{S}}$. This is the first explicit computation of a Poisson moduli
space in dimension greater or equal than three around a degenerate (i.e.\ not symplectic) Poisson structure.
\end{abstract}

\maketitle

\setcounter{tocdepth}{1}

%\tableofcontents

\section*{Introduction}

Recall that a \textbf{Poisson structure} on a manifold $M$ is a Lie bracket $\{\cdot,\cdot\}$ on $C^{\infty}(M)$, satisfying the Leibniz rule
\[\{f,gh\}=\{f,g\}h+\{f,h\}g.\]
Equivalently, a Poisson structure is given by bivector field $\pi\in\mathfrak{X}^2(M)$ satisfying $[\pi,\pi]=0$ for the Schouten bracket; the
bivector and the Lie bracket are related by
\[\{f,g\}=\langle\pi|df\wedge dg\rangle.\]
The \textbf{Hamiltonian} vector field of a function $f\in C^{\infty}(M)$ is $X_f:=\{f,\cdot\}\in \mathfrak{X}(M)$. These vector fields span a
singular involutive distribution, which integrates to a partition of $M$ into regularly immersed submanifolds called \textbf{symplectic leaves};
each such leaf $S$ carries canonically a symplectic structure: $\omega_S:=\pi_{|S}^{-1}\in\Omega^2(S)$. A smooth function, constant
along the symplectic leaves is called a \textbf{Casimir} function. We denote the space of Casimirs by $\mathfrak{Casim}(M,\pi)$.\\

Lie theory provides interesting examples of Poisson manifolds. The dual vector space of a Lie algebra $(\mathfrak{g},[\cdot,\cdot])$ carries a
canonical a Poisson structure $\pi_{\mathfrak{g}}$, given by
\[\langle(\pi_{\mathfrak{g}})_{\xi}|X\wedge Y\rangle:=\xi([X,Y]), \ \ \xi\in\mathfrak{g}^*, \ X,Y\in\mathfrak{g}=T^*_{\xi}\mathfrak{g}^*.\]
Assuming that $\mathfrak{g}$ is compact and semisimple, $\mathfrak{g}^*$ carries an $Aut(\mathfrak{g})$-invariant inner product (e.g. induced by
the Killing form). The corresponding unit sphere around the origin, denoted by $\mathbb{S}(\mathfrak{g}^*)$, inherits a Poisson structure
$\pi_{\mathbb{S}}:=\pi_{\mathfrak{g}|\mathbb{S}(\mathfrak{g}^*)}$. We will call $(\mathbb{S}(\mathfrak{g}^*),\pi_{\mathbb{S}})$ the
\textbf{Lie-Poisson sphere} corresponding to $\mathfrak{g}$. Lie algebra automorphisms of $\mathfrak{g}$ restrict to Poisson diffeomorphisms of
$\pi_{\mathbb{S}}$, and the inner automorphisms act trivially on Casimirs. Therefore $Out(\mathfrak{g})$, the group of outer automorphisms of
$\mathfrak{g}$, acts naturally on $\mathfrak{Casim}(\mathbb{S}(\mathfrak{g}^*),\pi_{\mathbb{S}})$.

Our main result describes all Poisson structures on $\mathbb{S}(\mathfrak{g}^*)$ near $\pi_{\mathbb{S}}$.

\begin{mtheorem}\label{Theorem_1}
For the Lie-Poisson sphere $(\mathbb{S}(\mathfrak{g}^*),\pi_{\mathbb{S}})$, corresponding to a compact
semisimple Lie algebra $\mathfrak{g}$, the following hold:
\begin{enumerate}[(a)]
\item There exists a $C^p$-open $\mathcal{W}\subset \mathfrak{X}^2(\mathbb{S}(\mathfrak{g}^*))$ around $\pi_{\mathbb{S}}$, such that every Poisson structure in $\mathcal{W}$ is isomorphic to one
of the form $f\pi_{\mathbb{S}}$, where $f$ is a positive Casimir, by a diffeomorphism isotopic to the identity.
\item For two positive Casimirs $f$ and $g$, the Poisson manifolds $(\mathbb{S}(\mathfrak{g}^*),f\pi_{\mathbb{S}})$ and
$(\mathbb{S}(\mathfrak{g}^*),g\pi_{\mathbb{S}})$ are isomorphic precisely when $f$ and $g$ are related by an
outer automorphism of $\mathfrak{g}$.
\end{enumerate}
\end{mtheorem}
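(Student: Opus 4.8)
The plan is to exploit that, because the chosen inner product is $Ad$-invariant, every coadjoint orbit lies on a sphere about the origin; hence the symplectic leaves of $\pi_{\mathbb S}$ are exactly the unit-norm coadjoint orbits, the leaf space $\mathbb S(\mathfrak g^*)/G$ is the spherical Weyl simplex $\Delta$ (the unit sphere of a Cartan $\mathfrak t^*$ modulo the Weyl group $W$), and $\mathfrak{Casim}(\mathbb S(\mathfrak g^*),\pi_{\mathbb S})\cong C^\infty(\Delta)$. For a positive Casimir $f$ the bivector $f\pi_{\mathbb S}$ is again Poisson (rescaling by a Casimir preserves the Jacobi identity), has the \emph{same} leaves, and restricts over $\lambda$ to the rescaled Kirillov form $f(\lambda)^{-1}\omega_\lambda$. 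These two remarks turn both statements into questions about how a Poisson isomorphism acts on $\Delta$ and on the leafwise symplectic data.

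For part (a) I would follow the Conn--Nash--Moser rigidity philosophy in a leafwise form. Infinitesimally, deformations of $\pi_{\mathbb S}$ modulo diffeomorphism are governed by the Poisson cohomology $H^2_{\pi_{\mathbb S}}$, and the first task is to show that its normalizable part consists exactly of the Casimir directions $\dot f\,\pi_{\mathbb S}$. Averaging cochains over the compact group $G$ reduces the computation to the $G$-invariant complex, and the transverse contribution is killed by Whitehead's lemmas $H^1(\mathfrak g)=H^2(\mathfrak g)=0$ for semisimple $\mathfrak g$, leaving only the rescaling classes. The remaining, and genuinely hard, step is to integrate this infinitesimal rigidity: a tame (Nash--Moser) implicit-function argument must produce, for every bivector $C^p$-close to $\pi_{\mathbb S}$, a diffeomorphism isotopic to the identity carrying it to some $f\pi_{\mathbb S}$, with $f$ automatically positive by closeness to the constant $1$. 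Controlling the homotopy operators uniformly so that the iteration converges in the tame Fr\'echet category is where I expect the main difficulty of (a) to lie.

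Part (b) splits into an easy and a hard direction. An outer class $\sigma\in Out(\mathfrak g)$ lifts to an automorphism of $\mathfrak g$, inducing an isometric Poisson diffeomorphism $\sigma$ of $(\mathbb S(\mathfrak g^*),\pi_{\mathbb S})$; then $\sigma_*(f\pi_{\mathbb S})=(f\circ\sigma^{-1})\pi_{\mathbb S}$, so if $g$ is the $\sigma$-translate of $f$ the structures are isomorphic, giving the ``if'' direction at once. For the converse, let $\Phi\colon(\mathbb S,f\pi_{\mathbb S})\to(\mathbb S,g\pi_{\mathbb S})$ be a Poisson isomorphism. Since multiplying by a nowhere-zero function does not change the Hamiltonian distribution, $\Phi$ carries leaves to leaves and descends to a homeomorphism $\bar\Phi\colon\Delta\to\Delta$. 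The transverse structure to the leaf through $\xi$ is the linear Poisson structure of the isotropy algebra $\mathfrak g_\xi$, so $\Phi$ preserves isotropy type; hence $\bar\Phi$ preserves the stratification of $\Delta$ whose faces are labelled by the Dynkin subdiagrams cutting out each $\mathfrak g_\xi$. The combinatorial automorphisms of this labelled simplex are precisely the diagram automorphisms, i.e.\ $Out(\mathfrak g)$, so $\bar\Phi$ realizes some $\sigma\in Out(\mathfrak g)$; replacing $\Phi$ by $\sigma^{-1}\circ\Phi$ I may assume $\bar\Phi$ fixes every stratum.

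It then remains to show $\bar\Phi=\mathrm{id}$ and $f=g$, and this leafwise rigidity is the crux. Every leaf on the sphere is positive-dimensional (the only fixed point of the coadjoint action is the origin, which is off the sphere), so $\Phi$ restricts to a genuine symplectomorphism $(\mathcal O_\lambda,f(\lambda)^{-1}\omega_\lambda)\to(\mathcal O_{\bar\Phi\lambda},g(\bar\Phi\lambda)^{-1}\omega_{\bar\Phi\lambda})$. Comparing symplectic volumes gives $f(\lambda)^{-r(\lambda)}\,\mathrm{Vol}(\mathcal O_\lambda)=g(\bar\Phi\lambda)^{-r(\lambda)}\,\mathrm{Vol}(\mathcal O_{\bar\Phi\lambda})$, while the Kirillov class $[\omega_\lambda]=\lambda\in\mathfrak t^*\cong H^2(\mathcal O_\lambda)$ must be carried by $\Phi^*$ to $[\omega_{\bar\Phi\lambda}]$ up to these rescaling factors. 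I would use this ``classifying map'' $\lambda\mapsto[\text{leafwise class}]$, which re-embeds $\Delta$ into $\mathfrak t^*/W$ as the identity, to pin $\bar\Phi=\mathrm{id}$ and then read off $f=g$ from the volume identity. The delicate point, and the main obstacle overall, is that $\Phi$ need not be $G$-equivariant, so the identification $H^2(\mathcal O_\lambda)\cong\mathfrak t^*$ is only canonical up to the symplectic mapping-class group of the flag manifold; ruling out exotic non-equivariant symplectomorphisms between coadjoint orbits of different highest weight (equivalently, exotic stratum-preserving self-maps of $\Delta$) is exactly what forces the rigidity, and is the step demanding the most care.
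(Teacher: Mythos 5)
Your outline correctly locates the two difficulties, but in both parts the proposal stops exactly at the step that constitutes the actual proof, so there are genuine gaps. For part (a), you propose to run a tame Nash--Moser iteration directly on $\mathbb{S}(\mathfrak{g}^*)$, after computing the invariant part of $H^2_{\pi_{\mathbb{S}}}$, and you explicitly defer ``controlling the homotopy operators uniformly so that the iteration converges'' --- but that \emph{is} the content of part (a), not a detail. Worse, a direct argument on the sphere is obstructed: $(\mathbb{S}(\mathfrak{g}^*),\pi_{\mathbb{S}})$ is not an integrable Poisson manifold (except for $\mathfrak{su}(2)$), so the compact symplectic groupoid needed to average and to build tame homotopy operators does not exist over the sphere itself. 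The paper's route is different and avoids this: it realizes each $f\pi_{\mathbb{S}}$ (for $f$ a positive Casimir) as the Poisson submanifold $S_f=\{\xi/f(\xi)\}$ of $(\mathfrak{g}^*,\pi_{\mathfrak{g}})$, where the ambient structure is integrated by the compact groupoid $T^*G\cong G\ltimes\mathfrak{g}^*$ with $H^2(G)=0$, and then quotes the rigidity theorem around compact Poisson submanifolds of \cite{Marcut}. The remaining work is soft: showing that the normalizing embedding carries $S_f$ to another sphere $S_g$ which is automatically a Poisson submanifold, so that $g$ is a Casimir and the composed map is a Poisson diffeomorphism isotopic to the identity. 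Without either importing such a rigidity theorem or actually carrying out the tame iteration, your part (a) is a program, not a proof. (You also need the open $\mathcal{W}$ to contain \emph{all} $f\pi_{\mathbb{S}}$ with $f>0$ a Casimir, which the paper gets by taking a union $\cup_f\mathcal{W}_f$; closeness to the constant $1$ is not enough.)

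For part (b), the ``if'' direction is fine. In the converse you correctly isolate the crux --- ruling out symplectomorphisms of coadjoint orbits that act ``exotically'' on $H^2$ --- but you leave it unresolved, and your intermediate stratification argument does not close the gap: a homeomorphism of the leaf space $\Delta$ preserving the isotropy stratification need not be induced by a diagram automorphism, and even granting that, you would still only know that each stratum is preserved setwise. The missing ingredient is Papadima's theorem: every diffeomorphism of $G/T$ acts on $H^{\bullet}(G/T)$ as some $\overline{\sigma}$ with $\sigma\in Aut(\mathfrak{g},\mathfrak{t})$, i.e.\ the graded automorphism group of $H^{\bullet}(G/T)$ is exactly $Aut(\Phi)$. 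Combined with the linear isomorphism $\mathfrak{t}^*\cong H^2(G/T)$, $\xi\mapsto[\omega_{\xi}]$, and the observation that over the connected chamber $\mathbb{S}(\mathfrak{c})$ the leafwise maps $\phi_{\xi}$ are all isotopic (hence induce one and the same map on $H^2$), this yields $\xi/f(\xi)=\sigma^*(\theta(\xi))/g(\theta(\xi))$; since $\sigma^*$ is an isometry one reads off $f(\xi)=g(\theta(\xi))$ and $\theta=(\sigma^{-1})^*$, and density of the regular leaves finishes the argument. Your volume identity is true but insufficient by itself (it is one scalar equation and does not determine $\bar\Phi$), so without the cohomological rigidity input the converse does not go through.
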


The open $\mathcal{W}$ will be constructed such that it contains all Poisson structures of the form $f\pi_{\mathbb{S}}$, with $f$ a positive
Casimir. Therefore, the map $F\mapsto e^{F}\pi_{\mathbb{S}}$ induces a bijection between the space
\begin{equation*}
\mathfrak{Casim}(\mathbb{S}(\mathfrak{g}^*),\pi_{\mathbb{S}})/Out(\mathfrak{g})
\end{equation*}
and an open around $\pi_{\mathbb{S}}$ in the Poisson moduli space of $\mathbb{S}(\mathfrak{g}^*)$. Using classical invariant theory, we show
that this space is isomorphic to
\[C^{\infty}(\overline{B})/Out(\mathfrak{g}),\]
where $B\subset \mathbb{R}^{l-1}$ is a bounded open which is invariant under a linear action of $Out(\mathfrak{g})$ on $\mathbb{R}^{l-1}$ and
$l=\mathrm{rank}(\mathfrak{g})$.\\

The space of Casimirs is the 0-th group of the \textbf{Poisson cohomology} of $(M,\pi)$, computed by the complex of multivector fields on $M$
with differential $d_{\pi}:=[\pi,\cdot]$
\[(\mathfrak{X}^{\bullet}(M),d_{\pi}), \ d_{\pi}(W)=[\pi,W].\]
The first cohomology group $H^1_{\pi}(M)$ represents the infinitesimal automorphisms of $\pi$ modulo those coming
from Hamiltonian vector fields and $H^2_{\pi}(M)$ has the heuristic interpretation of being the ``tangent space'' to
the Poisson moduli space at $\pi$. As our result suggests, for the Lie-Poisson sphere we have an isomorphism between
$\mathfrak{Casim}(\mathbb{S}(\mathfrak{g}^*),\pi_{\mathbb{S}})\cong
H^2_{\pi_{\mathbb{S}}}(\mathbb{S}(\mathfrak{g}^*))$ given by multiplication with $[\pi_{\mathbb{S}}]$ (for a proof see \cite{teza}).\\

There are only few descriptions, in the literature, of opens in the Poisson moduli space of a compact manifold, and we recall below two such
results.

For a compact symplectic manifold $(M,\omega)$, every Poisson structure $C^0$-close to $\omega^{-1}$ is
symplectic as well. The Moser argument shows that two symplectic structures in the same cohomology class, and
which are close enough to $\omega$, are symplectomorphic by a diffeomorphism isotopic to the identity. This implies
that the map $\pi\mapsto [\pi^{-1}]\in H^2(M)$ induces a bijection between an open in the space of all Poisson
structures modulo diffeomorphisms isotopic to the identity and an open in $H^2(M)$. Also the heuristic prognosis
holds, since $H^2(M)\cong H^2_{\omega^{-1}}(M)$. In general it is difficult to say more, that is, to determine
whether two symplectic structures, different in cohomology, are symplectomorphic. In Corollary
\ref{Corollary_moduli_on_coadjoint} we achieve this for the maximal coadjoint orbits of a compact semisimple Lie
algebra.

Radko obtains in \cite{Radko} a description of the moduli space of topologically stable bivectors on a compact oriented
surface $\Sigma$. These are bivectors $\pi\in \mathfrak{X}^2(\Sigma)$ that intersect the zero section of $\Lambda^2
T\Sigma$ transversally, and therefore form a dense $C^1$-open in $\mathfrak{X}^2(\Sigma)$. The moduli space
decomposes as a union of finite dimensional manifolds (of different dimensions), and its tangent space at
$\pi$ is precisely $H^2_{\pi}(\Sigma)$. Since $\Sigma$ is two-dimensional, every bivector $\pi\in \mathfrak{X}^2(\Sigma)$ is Poisson.\\

The main difficulty when studying deformations of Poisson structures on compact manifolds (in contrast, for example,
to complex structures) is that the Poisson complex fails to be elliptic, unless the structure is symplectic. Therefore, in
general, $H^2_{\pi}(M)$ and the Poisson moduli space are infinite dimensional. This is also the case for the
Lie-Poisson spheres, except for $\mathfrak{g}=\mathfrak{su}(2)$. The Lie algebra $\mathfrak{su}(2)$ is special also
because it is the only one for which the Lie-Poisson sphere is symplectic (thus the result follows from Moser's
theorem). Moreover, it is only the one for which the Lie-Poisson sphere is an integrable Poisson manifold (in the sense
of
\cite{CrFe2}).\\

\noindent \textbf{The outline of the paper.} In the first section we prove part (a) of the theorem. This is done by
realizing Poisson structures of the form $f\pi_{\mathbb{S}}$ as Poisson submanifolds of
$(\mathfrak{g}^*,\pi_{\mathfrak{g}})$, and then using the rigidity theorem around compact Poisson submanifolds
from \cite{Marcut}. In section 2 we discuss some standard results from Lie theory and a result from \cite{Papadima},
stating that all diffeomorphisms of a maximal coadjoint orbit are represented cohomologically by Lie group
automorphisms. In section 3 we conclude the proof of the theorem. Using that the regular part of a Poisson structure
$f\pi_{\mathbb{S}}$, for $f$ a positive Casimir, is a trivial foliation with leaves diffeomorphic to a maximal orbit, we
show that the symplectic structure on the leaves determines $f$ up to an outer automorphism of $\mathfrak{g}$.
Section 4 contains a description of the space of Casimirs. In the
last section we work out the case of $\mathfrak{g}=\mathfrak{su}(3)$.\\

\noindent \textbf{Acknowledgments.} I would like to thank Marius Crainic for his very useful suggestions and comments. This research was
supported by the ERC Starting Grant no. 279729.

\section{Proof of part $(a)$ of Theorem \ref{Theorem_1}}

In this section we will assume some familiarity with the theory of Lie algebroids and Lie groupoids. For definitions and
basic properties we recommend \cite{MackenzieGT}.

Throughout the paper, we fix a compact semisimple Lie algebra $(\mathfrak{g},[\cdot,\cdot])$. These assumptions on
$\mathfrak{g}$ are equivalent to compactness of $G$, the 1-connected Lie group of $\mathfrak{g}$. It is well known
that $1$-connectedness of $G$ also implies that $H^2(G)=0$ (see e.g. \cite{DK}). We also fix an inner product on
$\mathfrak{g}$ (hence also on $\mathfrak{g}^*$), which is not only $G$-invariant, but also
$Aut(\mathfrak{g})$-invariant; for example the negative of the Killing form. Notice that a $G$-invariant inner product
is not automatically $Aut(\mathfrak{g})$-invariant. For example, let $\mathfrak{g}$ be isomorphic to the direct
product $\mathfrak{g}\cong \mathfrak{k}\times\mathfrak{k}$, where $\mathfrak{k}$ is simple Lie algebra of
compact type. The $G$-invariant inner products on $\mathfrak{g}$ are of the form $s \mathrm{pr}_1^*(\kappa)+t
\mathrm{pr}_2^*(\kappa)$, for $s,t>0$, where $\kappa$ is the negative of the Killing form on $\mathfrak{k}$, but the
outer automorphism of $\mathfrak{g}$ that switches the two components stabilizes only inner products for which
$s=t$.

A symplectic groupoid integrating the linear Poisson structure $(\mathfrak{g}^*,\pi_{\mathfrak{g}})$ is
\[(T^*G,\omega_{\mathrm{can}})\rightrightarrows \mathfrak{g}^*.\]
As a Lie groupoid, $T^*G$ is isomorphic to the action groupoid $G\ltimes\mathfrak{g}^*\rightrightarrows
\mathfrak{g}^*$, hence all its $s$-fibers are diffeomorphic to $G$. Since $G$ is compact and $H^2(G)=0$, the
Poisson manifold $(\mathfrak{g}^*,\pi_{\mathfrak{g}})$ satisfies the conditions of Theorem 2 from \cite{Marcut},
and we state below conclusion (a) of this result.

\begin{corollary}[of Theorem 2 in \cite{Marcut}]\label{Corollary_A}
Let $S\subset \mathfrak{g}^*$ be a compact Poisson submanifold. There exists an integer $p\geq 0$ and there exist
(arbitrarily small) open neighborhoods $U\subset \mathfrak{g}^*$ of $S$, such that for every open set $O$ satisfying
$S\subset O\subset \overline{O}\subset U$, there exist
\begin{itemize}
\item an open neighborhood $\mathcal{V}_{O}\subset \mathfrak{X}^2(U)$ of $\pi_{|U}$ in the compact-open $C^p$-topology,
\item a function $\widetilde{\pi}\mapsto \psi_{\widetilde{\pi}}$, which associates to a Poisson structure $\widetilde{\pi}\in
\mathcal{V}_{O}$ an embedding $\psi_{\widetilde{\pi}}:\overline{O}\to {\mathfrak{g}^*}$,
\end{itemize}
such that $\psi_{\widetilde{\pi}}$ is a Poisson diffeomorphism between
\[\psi_{\widetilde{\pi}}:(O,\widetilde{\pi}_{|O})\rmap (\psi_{\widetilde{\pi}}(O),\pi_{|\psi_{\widetilde{\pi}}(O)}),\]
and $\psi$ is continuous at $\widetilde{\pi}=\pi$ (with $\psi_{\pi}=\mathrm{Id}_{\overline{O}}$), with respect to the
$C^p$-topology on the space of Poisson structures and the $C^1$-topology on
$C^{\infty}(\overline{O},{\mathfrak{g}^*})$.
\end{corollary}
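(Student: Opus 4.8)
The plan is to deduce the statement directly from Theorem 2 of \cite{Marcut}, whose conclusion (a) is a rigidity (normal-form) result around a compact Poisson submanifold: under the hypothesis that the ambient Poisson manifold is integrable by a symplectic groupoid with compact source fibers satisfying $H^2(\text{s-fiber})=0$, every Poisson structure on a suitable neighborhood $U$ of the submanifold that is $C^p$-close to the original is, after restriction to a slightly smaller domain, Poisson-diffeomorphic onto an open piece of the original structure, via an embedding depending continuously on the perturbed structure. The only work, then, is to verify the hypotheses for $(\mathfrak{g}^*,\pi_{\mathfrak{g}})$ and to transcribe conclusion (a) into the notation of the statement.

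For the hypothesis check I would assemble the facts recorded just above: the canonical symplectic groupoid $(T^*G,\omega_{\mathrm{can}})\rightrightarrows\mathfrak{g}^*$ integrates $\pi_{\mathfrak{g}}$, and as a Lie groupoid it is isomorphic to the action groupoid $G\ltimes\mathfrak{g}^*$, so each $s$-fiber is diffeomorphic to the group $G$. Compactness and semisimplicity of $\mathfrak{g}$ make $G$ compact, while $1$-connectedness of $G$ gives $H^2(G)=0$. These are precisely the integrability, compactness, and cohomological-vanishing assumptions demanded by Theorem 2 in \cite{Marcut}, so the theorem applies to any compact Poisson submanifold $S\subset\mathfrak{g}^*$.

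With the hypotheses in hand, I would invoke the theorem and read off conclusion (a): it supplies, for some $p\geq 0$ and arbitrarily small neighborhoods $U$ of $S$, the $C^p$-open $\mathcal{V}_{O}\subset\mathfrak{X}^2(U)$ of $\pi_{|U}$, the assignment $\widetilde{\pi}\mapsto\psi_{\widetilde{\pi}}$, and the property that $\psi_{\widetilde{\pi}}$ restricts to a Poisson diffeomorphism $(O,\widetilde{\pi}_{|O})\rmap(\psi_{\widetilde{\pi}}(O),\pi_{|\psi_{\widetilde{\pi}}(O)})$, normalised so that $\psi_{\pi}=\mathrm{Id}_{\overline{O}}$. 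The continuity of $\widetilde{\pi}\mapsto\psi_{\widetilde{\pi}}$ at $\widetilde{\pi}=\pi$, for the $C^p$-topology on structures and the $C^1$-topology on embeddings, is inherited verbatim from the cited result.

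The main obstacle is one of bookkeeping rather than of new mathematics: one must confirm that the quantitative part of Theorem 2 in \cite{Marcut} --- the existence of a continuous family of embeddings, the choice of topologies, and the freedom to shrink $U$ around $S$ --- is stated there in exactly the form needed here, and, if it is packaged differently, to extract the present formulation from it. No genuinely new argument beyond this hypothesis verification and translation should be required.
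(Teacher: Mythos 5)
Your proposal matches the paper exactly: the paper likewise treats this as an immediate application of Theorem 2 in \cite{Marcut}, verifying its hypotheses via the symplectic groupoid $(T^*G,\omega_{\mathrm{can}})\cong G\ltimes\mathfrak{g}^*$ (so the $s$-fibers are diffeomorphic to the compact group $G$, with $H^2(G)=0$ by $1$-connectedness) and then restating conclusion (a) verbatim. No further argument is given or needed.
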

We will apply this result to spheres in $\mathfrak{g}^*$. For $f\in C^{\infty}(\mathbb{S}(\mathfrak{g}^*))$, with
$f>0$, consider the following embedded sphere $S_f$ in $\mathfrak{g}^*\backslash\{0\}$
\[S_f:=\left\{\frac{1}{f(\xi)}\xi \ |\  \xi\in \mathbb{S}(\mathfrak{g}^*)\right\},\]
and denote by $\varphi_f: \mathbb{S}(\mathfrak{g}^*)\to S_f$, $\varphi_f(\xi):=\xi/f(\xi)$ the map parameterizing $S_f$. The spheres of type
$S_f$ form a $C^1$-open in the space of all (unparameterized) spheres, namely every sphere $S\subset \mathfrak{g}^*$, for which $0\notin S$ and
the map \[\mathrm{pr}:S\rmap \mathbb{S}(\mathfrak{g}^*),\ \  \xi\mapsto \frac{1}{|\xi|}\xi\] is a diffeomorphism, is of the form $S_f$ for some
positive function $f$ on $\mathbb{S}(\mathfrak{g}^*)$.

\begin{lemma}\label{Lemma_B}
The sphere $S_f$ is a Poisson submanifold if and only $f$ is a Casimir. In this case, the following map is a Poisson diffeomorphism
\[\widetilde{\varphi}_f:(\mathbb{S}(\mathfrak{g}^*)\times\mathbb{R}_{+}, tf\pi_{\mathbb{S}}) \rmap (\mathfrak{g}^*\backslash\{0\},\pi_{\mathfrak{g}}), \ (\xi,t)\mapsto \frac{1}{tf(\xi)}\xi.\]
\end{lemma}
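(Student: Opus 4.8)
The plan is to reduce everything to two structural features of the linear Poisson manifold $(\mathfrak{g}^*,\pi_{\mathfrak{g}})$: the function $\xi\mapsto|\xi|$ is a Casimir, and $\pi_{\mathfrak{g}}$ is homogeneous. First I would record that, since the chosen inner product is $\mathrm{Ad}$-invariant, $|\xi|^2$ is $\mathrm{Ad}^*$-invariant, hence a Casimir of $\pi_{\mathfrak{g}}$; consequently every sphere centred at the origin is a regular level set of a Casimir, i.e.\ a Poisson submanifold (this is exactly what makes $\pi_{\mathbb{S}}$ well defined), and the symplectic leaves of $\pi_{\mathfrak{g}}$ on $\mathfrak{g}^*\setminus\{0\}$ are the coadjoint orbits, each contained in a single sphere.

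For the first assertion I would use that a submanifold is Poisson precisely when it is a union of symplectic leaves. Thus $S_f$ is a Poisson submanifold if and only if it is saturated by coadjoint orbits. Writing a point of $S_f$ as $\xi/f(\xi)$ with $\xi\in\mathbb{S}(\mathfrak{g}^*)$, its coadjoint orbit is $\{\mathrm{Ad}^*_g\xi/f(\xi)\}$, which lies on the sphere of radius $1/f(\xi)$ and meets $S_f$ in the point $\mathrm{Ad}^*_g\xi/f(\mathrm{Ad}^*_g\xi)$; a direct comparison shows the orbit lies in $S_f$ exactly when $f(\mathrm{Ad}^*_g\xi)=f(\xi)$ for all $g$, i.e.\ when $f$ is constant along the coadjoint orbits inside the sphere. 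Since these orbits are the symplectic leaves of $\pi_{\mathbb{S}}$, this is the statement that $f$ is a Casimir.

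For the second assertion I would first treat the radial model. Because $\pi_{\mathfrak{g}}$ is linear, $\mathcal{L}_E\pi_{\mathfrak{g}}=-\pi_{\mathfrak{g}}$ for the Euler vector field $E$, equivalently $(m_\lambda)_*\pi_{\mathfrak{g}}=\lambda\,\pi_{\mathfrak{g}}$ for the dilation $m_\lambda(\xi)=\lambda\xi$. Introduce polar coordinates through $\Phi\colon\mathbb{S}(\mathfrak{g}^*)\times\mathbb{R}_+\to\mathfrak{g}^*\setminus\{0\}$, $\Phi(\xi,r)=r\xi$. Since $|\xi|$ is a Casimir, $\pi_{\mathfrak{g}}$ is tangent to the spheres, so the bivector $P$ with $\Phi_*P=\pi_{\mathfrak{g}}$ is tangent to the slices $\{r=\mathrm{const}\}$; restricting the dilation identity to spheres gives $(m_r)_*\pi_{\mathbb{S}}=r\,\pi_{r\mathbb{S}}$ (where $\pi_{r\mathbb{S}}$ is the induced structure on the sphere of radius $r$), and unwinding this yields $P=\tfrac{1}{r}\pi_{\mathbb{S}}$. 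Hence $\Phi$ is a Poisson diffeomorphism $(\mathbb{S}(\mathfrak{g}^*)\times\mathbb{R}_+,\tfrac{1}{r}\pi_{\mathbb{S}})\to(\mathfrak{g}^*\setminus\{0\},\pi_{\mathfrak{g}})$.

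Finally I would factor $\widetilde{\varphi}_f=\Phi\circ\Psi$, where $\Psi(\xi,t)=(\xi,1/(tf(\xi)))$ is the fibrewise reparametrization $r=1/(tf(\xi))$, and check that $\Psi$ is a Poisson diffeomorphism from $tf\,\pi_{\mathbb{S}}$ to $\tfrac{1}{r}\pi_{\mathbb{S}}$; composing with the previous step finishes the proof. Here is where the Casimir hypothesis does the real work, and I expect this to be the only delicate point: although $\Psi$ mixes the radial and spherical directions (the new radius depends on $\xi$), the bivector $\pi_{\mathbb{S}}$ is supported on the leaf-tangent directions, along which $df$ vanishes because $f$ is a Casimir. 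Consequently $d\Psi$ restricts to the identity on leaf-tangent vectors, the twisting of the slicing is invisible to $\pi_{\mathbb{S}}$, and the scalar transforms exactly as $tf=\tfrac{1}{r}$, giving $\Psi_*(tf\,\pi_{\mathbb{S}})=\tfrac{1}{r}\pi_{\mathbb{S}}$.
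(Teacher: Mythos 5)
Your proof is correct, and the first assertion is handled essentially as in the paper: there too the point is that (compact, embedded) Poisson submanifolds of $\mathfrak{g}^*$ are exactly the $G$-invariant ones, while Casimirs of $\pi_{\mathbb{S}}$ are exactly the $G$-invariant functions on $\mathbb{S}(\mathfrak{g}^*)$; your explicit comparison of the orbit of $\xi/f(\xi)$ with $S_f$ just spells this out. For the second assertion, however, you take a genuinely different route. The paper verifies directly that $\widetilde{\varphi}_f^*$ preserves the bracket of linear functions $X,Y\in\mathfrak{g}\subset C^{\infty}(\mathfrak{g}^*)$ (which suffices, since their differentials span the cotangent spaces): one has $\widetilde{\varphi}_f^*(X)=\tfrac{1}{tf}X_{|\mathbb{S}(\mathfrak{g}^*)}$, the factor $\tfrac{1}{tf}$ is a Casimir of $tf\pi_{\mathbb{S}}$ and so can be pulled in and out of the bracket, and the fact that $\mathbb{S}(\mathfrak{g}^*)$ is a Poisson submanifold of $\mathfrak{g}^*$ finishes a three-line computation. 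You instead factor $\widetilde{\varphi}_f=\Phi\circ\Psi$ through polar coordinates, using the degree of homogeneity $(m_\lambda)_*\pi_{\mathfrak{g}}=\lambda\,\pi_{\mathfrak{g}}$ to identify $\Phi_*\bigl(\tfrac{1}{r}\pi_{\mathbb{S}}\bigr)=\pi_{\mathfrak{g}}$, and then the fact that $\pi_{\mathbb{S}}$ is a section of $\Lambda^2$ of the leafwise tangent distribution, on which $df$ vanishes, to see that the $\xi$-dependent reparametrization $\Psi$ is invisible to the bivector. Both arguments are complete; the paper's is shorter and purely computational, while yours isolates the two structural inputs (homogeneity of the linear Poisson structure and leaf-tangency of the bivector) and makes it more transparent exactly where the Casimir hypothesis is used.
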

\begin{proof}
Compact Poisson submanifolds of $\mathfrak{g}^*$ are the same as $G$-invariant submanifolds, and Casimirs of $\pi_{\mathbb{S}}$ are the same as
$G$-invariant functions on $\mathbb{S}(\mathfrak{g}^*)$. This implies the first part. For the second part, it suffices to check that
$\widetilde{\varphi}_f^*$ preserves the Lie bracket of $X,Y\in \mathfrak{g}\subset C^{\infty}(\mathfrak{g}^*)$. Using that Casimirs go inside
the bracket and that $\mathbb{S}(\mathfrak{g}^*)$ is a Poisson submanifold, this is straightforward
\begin{align*}
\widetilde{\varphi}_{f}^*(\{X,Y\})=&\frac{1}{tf}\{X,Y\}_{|\mathbb{S}(\mathfrak{g}^*)}=tf\{\frac{1}{tf}X_{|\mathbb{S}(\mathfrak{g}^*)},\frac{1}{tf}Y_{|\mathbb{S}(\mathfrak{g}^*)}\}_{|\mathbb{S}(\mathfrak{g}^*)}=\\
&=tf\{\widetilde{\varphi}_f^*(X),\widetilde{\varphi}_f^*(Y)\}_{|\mathbb{S}(\mathfrak{g}^*)}.\qedhere
\end{align*}
\end{proof}

We are now ready to prove the first part of Theorem \ref{Theorem_1}.
\begin{proof}[Proof of part (a) of Theorem \ref{Theorem_1}]
For every Casimir $f>0$, we construct a $C^p$-open $\mathcal{W}_{f}\subset \mathfrak{X}^2(\mathbb{S}(\mathfrak{g}^*))$ containing
$f\pi_{\mathbb{S}}$, such that every Poisson structure in $\mathcal{W}_f$ is isomorphic to one of the form $g\pi_{\mathbb{S}}$, for $g>0$ a
Casimir, by a diffeomorphism isotopic to the identity. Then $\mathcal{W}:=\cup_{f}\mathcal{W}_f$ satisfies the conclusion.

We will apply Corollary \ref{Corollary_A} to the sphere $S_f$. Let $S_f\subset O\subset U$ be opens as in the corollary,
with $0\notin U$. Denote by $\mathcal{U}_f$ the set of functions $\chi\in C^{\infty}(\overline{O},\mathfrak{g}^*)$
satisfying $0\notin \chi(S_f)$, and for which the map
\[\mathrm{pr}\circ \chi \circ \varphi_{f}:\mathbb{S}(\mathfrak{g}^*)\rmap \mathbb{S}(\mathfrak{g}^*)\]
is a diffeomorphism isotopic to the identity. The first condition is $C^0$-open and the second is $C^1$-open. For the
inclusion $\textrm{Id}_{\overline{O}}$ of $\overline{O}$ in $\mathfrak{g}^*$, we have that $\mathrm{pr}\circ
\textrm{Id}_{\overline{O}}\circ \varphi_f=\textrm{Id}$, thus $\mathcal{U}_f$ is $C^1$-neighborhood of
$\textrm{Id}_{\overline{O}}$. By continuity of $\psi$, there exists a $C^p$-neighborhood $\mathcal{V}_f\subset
\mathfrak{X}^2(U)$ of $\pi_{\mathfrak{g}|U}$, such that $\psi_{\widetilde{\pi}}\in \mathcal{U}_f$, for every
Poisson structure $\widetilde{\pi}$ in $\mathcal{V}_f$. We define the $C^p$-open $\mathcal{W}_f$ as follows
\[\mathcal{W}_f:=\{W\in\mathfrak{X}^2(\mathbb{S}(\mathfrak{g}^*)) | \widetilde{\varphi}_{f,*}(t W)_{|U}\in \mathcal{V}_f\}.\]
By Lemma \ref{Lemma_B}, we have that $\widetilde{\varphi}_{f,*}(t
f\pi_{\mathbb{S}})_{|U}=\pi_{\mathfrak{g}|U}$, thus $f\pi_{\mathbb{S}}\in \mathcal{W}_f$. Let $\overline{\pi}$ be
a Poisson structure in $\mathcal{W}_f$. Then for $\widetilde{\pi}:=\widetilde{\varphi}_{f,*}(t\overline{\pi})_{|U}\in
\mathcal{V}_f$, we have that $\psi_{\widetilde{\pi}}\in\mathcal{U}_f$ is a Poisson map between
\[\psi_{\widetilde{\pi}}:(O,\widetilde{\pi}_{|O})\rmap (U,\pi_{\mathfrak{g}|U}).\]
By the discussion before Lemma \ref{Lemma_B}, the condition that $\mathrm{pr}\circ
\psi_{\widetilde{\pi}}\circ\varphi_f$ is a diffeomorphism, implies that $\psi_{\widetilde{\pi}}(S_f)=S_g$, for some
$g>0$. Since $(\mathbb{S}(\mathfrak{g}^*)\times \{1\},\overline{\pi})$ is a Poisson submanifold of
$(\mathbb{S}(\mathfrak{g}^*)\times \mathbb{R}_{+},t\overline{\pi})$, it follows that
$S_f=\widetilde{\varphi}_f(\mathbb{S}(\mathfrak{g}^*)\times \{1\})$ is a Poisson submanifold of
$(O,\widetilde{\pi}_{|O})$, and since $\psi_{\widetilde{\pi}}$ is a Poisson map, we also have that
$S_g=\psi_{\widetilde{\pi}}(S_f)$ is a Poisson submanifold of $(\mathfrak{g}^*,\pi_{\mathfrak{g}})$. So, by Lemma
\ref{Lemma_B}, $g$ is a Casimir and
\[\varphi_g:(\mathbb{S}(\mathfrak{g}^*),g\pi_{\mathbb{S}})\rmap (S_g,\pi_{\mathfrak{g}|S_g})\]
is a Poisson diffeomorphism. Therefore also the map
\[\varphi_g^{-1}\circ\psi_{\widetilde{\pi}}\circ\varphi_f:(\mathbb{S}(\mathfrak{g}^*),\overline{\pi})\rmap (\mathbb{S}(\mathfrak{g}^*),g\pi_{\mathbb{S}}),\]
is a Poisson diffeomorphism. This map is isotopic to the identity, because $\varphi_g^{-1}=\mathrm{pr}_{|S_g}$, and
by construction $\mathrm{pr}\circ \psi_{\widetilde{\pi}} \circ \varphi_{f}$ is isotopic to the identity.
\end{proof}

\section{Some standard Lie theoretical results}\label{Section_Lie_results}
In this section we recall some results on semisimple Lie algebras, which will be used in the proof. Most of these can be found in standard
textbooks like \cite{Dixmier,DK,Knapp}.

\subsection{Automorphisms}

The group $Aut(\mathfrak{g})$, of Lie algebra automorphisms of $\mathfrak{g}$, contains the normal subgroup $Ad(G)$, of inner automorphisms.
Below we recall two descriptions of the group of \textbf{outer automorphisms} $Out(\mathfrak{g}):=Aut(\mathfrak{g})/Ad(G)$.

We fix $T\subset G$ a maximal torus with Lie algebra $\mathfrak{t}\subset \mathfrak{g}$. Let $Aut(\mathfrak{g},\mathfrak{t})$ be the subgroup of
$Aut(\mathfrak{g})$ consisting of elements which send $\mathfrak{t}$ to itself. Since every two maximal tori are conjugated,
$Aut(\mathfrak{g},\mathfrak{t})$ intersects every component of $Aut(\mathfrak{g})$; hence $Out(\mathfrak{g})\cong
Aut(\mathfrak{g},\mathfrak{t})/Ad(N_G(T))$, where $N_G(T)$ is the normalizer of $T$ in $G$.

Denote by $\Phi\subset i\mathfrak{t}^*$ the corresponding root system, and its symmetry group by
\[Aut(\Phi):=\{f\in Gl(i\mathfrak{t}^*):f(\Phi)=\Phi\}.\]
For $\sigma\in Aut(\mathfrak{g},\mathfrak{t})$, we have that $(\sigma_{|\mathfrak{t}})^*\in Aut(\Phi)$. This gives a group homomorphism
\[\tau: Aut(\mathfrak{g},\mathfrak{t})\rmap Aut(\Phi), \ \sigma\mapsto (\sigma^{-1}_{|\mathfrak{t}})^*.\]

Let $W\subset Aut(\Phi)$ be the \textbf{Weyl group} of $\Phi$. Theorem 7.8 \cite{Knapp} gives the following.

\begin{lemma}\label{Lemma_3}
The map $\tau:Aut(\mathfrak{g},\mathfrak{t})\to Aut(\Phi)$ is surjective, and \[\tau^{-1}(W)=Ad(N_G(T))\subset Aut(\mathfrak{g},\mathfrak{t}).\]
Therefore, $\tau$ induces an isomorphism between $Out(\mathfrak{g})\cong Aut(\Phi)/W$.

Moreover, if $\mathfrak{c}$ is an open Weyl chamber, then $Aut(\Phi)=W\rtimes Aut(\Phi,\mathfrak{c})$, where \[Aut(\Phi,\mathfrak{c}):=\{f\in
Aut(\Phi)| f(\mathfrak{c})=\mathfrak{c}\},\] and $Aut(\Phi,\mathfrak{c})$ is isomorphic to the symmetry group of the Dynkin diagram of $\Phi$.
\end{lemma}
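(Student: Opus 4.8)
The plan is to separate the statement into two layers: purely combinatorial facts about the abstract root system $\Phi$ and its symmetry group, and the Lie-theoretic identification carried by the homomorphism $\tau$. For the latter I would pass to the complexification $\mathfrak{g}_{\mathbb{C}}$ and work with the root-space decomposition $\mathfrak{g}_{\mathbb{C}}=\mathfrak{t}_{\mathbb{C}}\oplus\bigoplus_{\alpha\in\Phi}\mathfrak{g}_\alpha$, since an element of $Aut(\mathfrak{g},\mathfrak{t})$ complexifies to an automorphism preserving $\mathfrak{t}_{\mathbb{C}}$ and permuting the root spaces exactly according to its image under $\tau$. Throughout I would freely use the already-noted identification $Out(\mathfrak{g})\cong Aut(\mathfrak{g},\mathfrak{t})/Ad(N_G(T))$.

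I would dispose of the root-system facts first, as they are self-contained. The Weyl group $W$ acts simply transitively on the set of open Weyl chambers (equivalently on the bases of $\Phi$); this gives at once $W\cap Aut(\Phi,\mathfrak{c})=\{1\}$ and, by sending an arbitrary $f\in Aut(\Phi)$ to the unique $w\in W$ with $wf(\mathfrak{c})=\mathfrak{c}$, the factorization $Aut(\Phi)=W\cdot Aut(\Phi,\mathfrak{c})$. That $W$ is normal follows from $fs_\alpha f^{-1}=s_{f(\alpha)}\in W$ for every reflection and every $f\in Aut(\Phi)$, yielding the semidirect product $Aut(\Phi)=W\rtimes Aut(\Phi,\mathfrak{c})$. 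Finally, an element fixing $\mathfrak{c}$ must permute its walls, hence the simple roots, preserving all Cartan integers $\langle\alpha,\beta^\vee\rangle$; this is precisely an automorphism of the Dynkin diagram, and conversely every diagram automorphism extends uniquely by linearity to an element of $Aut(\Phi,\mathfrak{c})$.

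Next I would establish surjectivity of $\tau$ and the identification of $\tau^{-1}(W)$. The inclusion $Ad(N_G(T))\subseteq\tau^{-1}(W)$ is immediate, since $N_G(T)/T\cong W$ and conjugation by $N_G(T)$ realizes exactly the Weyl group action on $\mathfrak{t}$; together with simple transitivity this already puts $W\subseteq\mathrm{im}(\tau)$. To reach the diagram automorphisms I would invoke the isomorphism (Chevalley) theorem: after fixing a base and Chevalley generators, any permutation of the simple roots preserving the Cartan matrix extends to an automorphism of $\mathfrak{g}_{\mathbb{C}}$ preserving $\mathfrak{t}_{\mathbb{C}}$ and the chosen positive system, and checking that it commutes with the compact conjugation (possible because the inner product, hence the compact real form, is $Aut(\mathfrak{g})$-invariant) produces an element of $Aut(\mathfrak{g},\mathfrak{t})$ mapping onto the given diagram automorphism. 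Combined with the previous paragraph this gives surjectivity.

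For the reverse inclusion $\tau^{-1}(W)\subseteq Ad(N_G(T))$, given $\sigma$ with $\tau(\sigma)=w\in W$ I would compose with a suitable representative $Ad(n)$, $n\in N_G(T)$, to reduce to the case $\tau(\sigma)=1$, i.e.\ $\sigma$ fixing $\mathfrak{t}$ pointwise. Then $\sigma$ preserves each $\mathfrak{g}_\alpha$, acting by a scalar $c_\alpha\in\mathbb{C}^*$, and the bracket relations $[\mathfrak{g}_\alpha,\mathfrak{g}_\beta]\subseteq\mathfrak{g}_{\alpha+\beta}$ force $\alpha\mapsto c_\alpha$ to be a character of the root lattice; such a character is realized by $Ad(t)$ for a unique $t$ in the complex torus, and invariance under the compact structure places $t\in T$, so $\sigma\in Ad(T)\subseteq Ad(N_G(T))$. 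The isomorphism $Out(\mathfrak{g})\cong Aut(\Phi)/W$ then follows by passing the identity $\tau^{-1}(W)=Ad(N_G(T))$ through the quotient. I expect the main obstacle to be the isomorphism-theorem step, namely producing a genuine Lie algebra automorphism realizing a prescribed diagram automorphism and controlling its behaviour under the compact real structure; the normality of $W$, the chamber count, and the character argument are routine by comparison.
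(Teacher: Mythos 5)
Your argument is correct in substance, but note that the paper does not prove this lemma at all: it is quoted verbatim as a consequence of Theorem~7.8 in Knapp's book, so there is nothing in the text to compare against except the citation. What you have written is essentially a reconstruction of the standard proof that the citation points to. The combinatorial layer (simple transitivity of $W$ on chambers giving $Aut(\Phi)=W\cdot Aut(\Phi,\mathfrak{c})$ with trivial intersection, normality of $W$ via $fs_\alpha f^{-1}=s_{f(\alpha)}$, identification of $Aut(\Phi,\mathfrak{c})$ with the diagram symmetries) is routine and correct, and your treatment of $\tau^{-1}(W)\subseteq Ad(N_G(T))$ via the character $\alpha\mapsto c_\alpha$ of the root lattice is the right argument; the only care needed there is that $t\in T_{\mathbb{C}}$ is unique only up to $Z(G_{\mathbb{C}})$, and it is precisely this freedom, together with $|c_\alpha|=1$ (forced by compatibility with the compact conjugation), that lets you place a representative in $T$.

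The one place where your justification is imprecise is the surjectivity step onto the diagram automorphisms. You say the Chevalley-generator automorphism of $\mathfrak{g}_{\mathbb{C}}$ preserves the compact form ``because the inner product, hence the compact real form, is $Aut(\mathfrak{g})$-invariant'' --- but that is circular as stated, since at that point you do not yet know the constructed automorphism lies in $Aut(\mathfrak{g})$; it is a priori only an automorphism of $\mathfrak{g}_{\mathbb{C}}$. The correct justification is that the diagram automorphism, defined by permuting a Chevalley basis $(e_{\alpha_i},f_{\alpha_i},h_{\alpha_i})$, visibly commutes with the compact anti-involution $e_\alpha\mapsto -f_\alpha$, $h\mapsto -h$, and hence preserves the associated compact real form, which (after an inner conjugation) may be taken to be $\mathfrak{g}$ with $\mathfrak{t}$ as Cartan subalgebra. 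With that repair the proof is complete; since you explicitly flagged this step as the main obstacle, I would count the proposal as a sound, self-contained replacement for the citation rather than as containing a genuine gap.
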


The last part of the lemma allows us to compute $Out(\mathfrak{g})$ for all semisimple compact Lie algebras. First, it is enough to consider
simple Lie algebras, since if $\mathfrak{g}$ decomposes into simple components as $n_1\mathfrak{s}_1\oplus \ldots \oplus n_k \mathfrak{s}_k$,
then \[Out(\mathfrak{g})\cong S_{n_1}\ltimes Out(\mathfrak{s}_1)^{n_1}\times\ldots \times S_{n_k}\ltimes Out(\mathfrak{s}_k)^{n_k}.\] Further,
for the simple Lie algebras, a glimpse at their Dynkin diagram reveals that the only ones with nontrivial outer automorphism group are:
$A_{n\geq 2}$, $D_{n\geq 5}$, $E_6$ with $Out\cong \mathbb{Z}_2$, and $D_{4}$ with $Out\cong S_{3}$.

\subsection{The coadjoint action and its symplectic orbits}

The adjoint action of $G$ on $\mathfrak{g}$ will be denoted by $Ad_g(X)$, the coadjoint action by $Ad_g^{\dagger}(\xi):=\xi \circ Ad_{g^{-1}}$.

The symplectic leaves of the Poisson manifold $(\mathfrak{g}^*,\pi_{\mathfrak{g}})$ are the coadjoint orbits. For $\xi\in\mathfrak{g}^*$, denote
by $(O_{\xi},\Omega_{\xi})$ the symplectic leaf through $\xi$, by $G_{\xi}\subset G$ the stabilizer of $\xi$ and by $\mathfrak{g}_{\xi}\subset
\mathfrak{g}$ the Lie algebra of $G_{\xi}$. The pullback of $\Omega_{\xi}$ to $G$, via the map $g\mapsto Ad_g^{\dagger}(\xi)$, is
$d\widetilde{\xi}$, where $\widetilde{\xi}\in \Omega^1(G)$ is the left invariant extension of $\xi$.

The adjoint representation is isomorphic to the coadjoint representation; an isomorphism between them is induced by
the Killing form. We restate here some standard results about the adjoint action in terms of the coadjoint (as a
reference see section 3.2 in \cite{DK}). We are interested especially in the set $\mathfrak{g}^{*}_{\mathrm{reg}}$
of \textbf{regular} elements. An element $\xi\in\mathfrak{g}^*$ is regular if and only if it satisfies any of the following
equivalent conditions:
\begin{itemize}
\item $\mathfrak{g}_{\xi}$ is a maximal abelian subalgebra;
\item the leaf $O_{\xi}$ has maximal dimension among all leaves;
\item $G_{\xi}$ is a maximal torus in $G$.
\end{itemize}

We regard $\mathfrak{t}^*$ as a subspace of $\mathfrak{g}^*$, by identifying it with
$\mathfrak{t}^*=\{\xi\in\mathfrak{g}^* | \mathfrak{t}\subset\mathfrak{g}_{\xi}\}\subset\mathfrak{g}^*$.
Consider $\mathfrak{t}^{*}_{\mathrm{reg}}:=\mathfrak{t}^{*}\cap \mathfrak{g}^{*}_{\mathrm{reg}}$, the
regular part of $\mathfrak{t}^*$. Then $\mathfrak{t}^{*}_{\mathrm{reg}}$ is the union of the open Weyl chambers.
If $\mathfrak{c}$ is such a chamber, the global structure of $\mathfrak{g}^{*}_{\mathrm{reg}}$ is described by the
equivariant diffeomorphism (Proposition 3.8.1 \cite{DK})
\begin{equation}\label{EQ_regular_part}
\Psi:G/T\times \mathfrak{c} \diffto \mathfrak{g}^{*}_{\mathrm{reg}}, \ \ \Psi([g],\xi)= Ad_{g}^{\dagger}(\xi).
\end{equation}

\subsection{The maximal coadjoint orbits}

The manifold $G/T$ is called a \textbf{generalized flag manifold}, and it is diffeomorphic to all maximal leaves of the linear Poisson structure
on $\mathfrak{g}^*$. Their cohomology is well understood \cite{Borel}; we recall here:

\begin{lemma}\label{Lemma_1}
For $\xi\in\mathfrak{t}^*$, $d\widetilde{\xi}$ is the pullback of a 2-form $\omega_{\xi}$ on $G/T$. Moreover, the
assignment $\mathfrak{t}^*\ni\xi\mapsto [\omega_{\xi}]\in H^2(G/T)$ is a linear isomorphism.
\end{lemma}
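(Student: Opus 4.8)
The plan is to establish the two claims of Lemma~\ref{Lemma_1} by working on the total space $G$ and then descending to $G/T$. First I would recall the setup: for $\xi\in\mathfrak{t}^*$, the one-form $\widetilde{\xi}\in\Omega^1(G)$ is the left-invariant extension, and we already know from the preceding discussion that $d\widetilde{\xi}$ equals the pullback of the orbit symplectic form $\Omega_{\xi}$ under $g\mapsto Ad^{\dagger}_g(\xi)$. To show $d\widetilde{\xi}$ descends to $G/T$, I would verify that it is basic for the right $T$-action, i.e.\ that it is both $T$-invariant and horizontal (its contraction with every fundamental vector field $X_H$, $H\in\mathfrak{t}$, vanishes). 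Invariance is automatic since $\widetilde{\xi}$ is left-invariant and $d$ commutes with pullback. For horizontality I would use $\iota_{X_H}d\widetilde{\xi}=\mathcal{L}_{X_H}\widetilde{\xi}-d(\iota_{X_H}\widetilde{\xi})$; the crucial point is that $\xi\in\mathfrak{t}^*$ means $\mathfrak{t}\subset\mathfrak{g}_\xi$, equivalently $\xi([\mathfrak{t},\cdot])=0$, which forces both terms to vanish because $\widetilde{\xi}(X_H)$ is the constant $\xi(H)$ and the Lie derivative along the right-invariant/torus direction acts trivially on left-invariant data paired with $\mathfrak{t}$. Thus $d\widetilde{\xi}=p^*\omega_\xi$ for a unique closed $\omega_\xi\in\Omega^2(G/T)$, where $p:G\to G/T$.

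For the second claim, I would first check linearity of $\xi\mapsto[\omega_\xi]$, which is immediate since $\xi\mapsto\widetilde{\xi}$ is linear and pullback, $d$, and passage to cohomology are all linear. The substance is that the induced map $\mathfrak{t}^*\to H^2(G/T)$ is a \emph{bijection}. The natural approach is a dimension count combined with injectivity. The dimension of $\mathfrak{t}^*$ is $l=\mathrm{rank}(\mathfrak{g})$, so I would invoke Borel's computation of $H^*(G/T)$ (citing \cite{Borel}) to the effect that $\dim H^2(G/T)=l$; equivalently, $H^2(G/T;\mathbb{R})$ is identified with $\mathfrak{t}^*$ via the weight lattice, the line bundles $L_\lambda$ over $G/T$ having first Chern classes that span. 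Given matching dimensions, it then suffices to prove injectivity: if $[\omega_\xi]=0$ then $\xi=0$.

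For injectivity I would argue geometrically. Suppose $\omega_\xi$ is exact on $G/T$. Pulling back along the orbit identification, $\Omega_\xi$ would then be exact on the maximal coadjoint orbit $O_\xi\cong G/G_\xi$; but for $\xi$ regular, $O_\xi\cong G/T$ is a compact symplectic (indeed Kähler) manifold of real dimension $\dim G - l$, whose symplectic form cannot be exact by Stokes' theorem applied to the top power $\Omega_\xi^{\wedge(\dim O_\xi/2)}$, a volume form with nonzero integral. Hence $\omega_\xi$ is exact only if $\Omega_\xi=0$, which happens exactly when $O_\xi$ is a point, i.e.\ $\xi=0$. For singular nonzero $\xi\in\mathfrak{t}^*$ the orbit is smaller but the same nondegeneracy-of-the-top-power argument applies on $O_\xi$ after noting $\omega_\xi$ restricts compatibly; alternatively one reduces to the regular case by a limiting/linearity argument since the map is already known to be linear on all of $\mathfrak{t}^*$. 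This yields $\ker=0$, and together with the dimension count gives the isomorphism.

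The main obstacle I anticipate is the clean verification that $d\widetilde{\xi}$ is horizontal for the right $T$-action precisely when $\xi\in\mathfrak{t}^*$, since one must be careful distinguishing the left-invariant extension $\widetilde{\xi}$ from the behavior under the \emph{right} torus action that defines the fibration $p:G\to G/T$; the computation of $\mathcal{L}_{X_H}\widetilde{\xi}$ mixes the two and needs the condition $\xi|_{[\mathfrak{t},\mathfrak{g}]}=0$ at exactly the right place. The surjectivity/dimension input is essentially quoted from \cite{Borel} and is not where the difficulty lies; likewise injectivity, once framed via the nonvanishing of $\int_{O_\xi}\Omega_\xi^{\wedge k}$, is routine. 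So I would spend most of the write-up making the descent and the $\mathfrak{t}^*$-identification precise.
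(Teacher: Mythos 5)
Your descent argument for the first claim and your dimension count are fine, and they track the paper's proof closely enough (the paper gets $\dim H^2(G/T)=\dim\mathfrak{t}$ from the homotopy exact sequence of $T\to G\to G/T$ plus Hurewicz rather than quoting Borel, and it gets the descent for free from the factorization $G\to G/T\to G/G_{\xi}$, using that $d\widetilde{\xi}$ is already known to be the pullback of $\Omega_{\xi}$ and $T\subset G_{\xi}$). One small inaccuracy: right-$T$-invariance of $\widetilde{\xi}$ is not ``automatic'' from left-invariance; it needs $\xi\circ Ad_t=\xi$ for $t\in T$, i.e.\ exactly the condition $\mathfrak{t}\subset\mathfrak{g}_{\xi}$ that you invoke only for horizontality. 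Your Cartan-formula computation supplies this anyway, so that part is repairable on the spot.

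The genuine gap is in the injectivity step for singular nonzero $\xi\in\mathfrak{t}^*$. Your top-power argument shows $[\Omega_{\xi}]\neq 0$ in $H^2(O_{\xi})$, but $\omega_{\xi}$ does not ``restrict'' to $O_{\xi}$: it is the pullback $\mathrm{pr}^*\Omega_{\xi}$ along the fibration $\mathrm{pr}:G/T\to G/G_{\xi}$ with fiber $G_{\xi}/T$, so to conclude $[\omega_{\xi}]\neq 0$ you still need injectivity of $\mathrm{pr}^*:H^2(G/G_{\xi})\to H^2(G/T)$. That is true (e.g.\ because $\chi(G_{\xi}/T)\neq 0$, so integration over the fiber is a one-sided inverse up to a nonzero factor, or by Leray--Hirsch), but it is a real extra input you do not supply. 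Your proposed fallback via linearity does not close the gap either: linearity only tells you the kernel is a linear subspace avoiding $\mathfrak{t}^{*\mathrm{reg}}$, hence contained in the union of the walls, hence in a single wall --- which does not force it to be zero. The paper avoids all of this with a uniform argument valid for every nonzero $\xi\in\mathfrak{t}^*$: pick $X\in\mathfrak{t}$ with $\xi(X)\neq 0$ and $\exp(X)=1$, cap the contractible loop $\gamma_X(t)=\exp(tX)$ by a disc $D_X$ in $G$, project to a sphere $S_X\subset G/T$, and compute $\int_{S_X}\omega_{\xi}=\int_{\gamma_X}\widetilde{\xi}=\xi(X)\neq 0$ by Stokes. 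You should either adopt that argument or explicitly prove the injectivity of $\mathrm{pr}^*$ for the singular orbits.
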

\begin{proof}
Since $d\widetilde{\xi}$ is the pullback of the symplectic structure on the symplectic leaf $O_{\xi}$ via the map $G\to
G/G_{\xi}\cong O_{\xi}$, and since $T\subset G_{\xi}$, the first part follows.

Viewing $G$ as a principal $T$-bundle over $G/T$, the long exact sequence for the homotopy groups gives $\pi_1(T)\cong \pi_2(G/T)$ and
$\pi_1(G/T)=0$; thus, using also the Hurewicz theorem, we obtain that the second Betti number of $G/T$ equals $\mathrm{dim}(\mathfrak{t})$. So
it suffices to show injectivity of the map $\xi\mapsto [\omega_{\xi}]$. %; which will be proved using the boundary map of the long exact sequence mentioned above.
Let $\xi\in \mathfrak{t}^*$, with $\xi\neq 0$. Then we can find an element $X\in\mathfrak{t}$ such that $\xi(X)\neq 0$ and $\exp(X)=1$. Since
$G$ is simply connected, the loop $\gamma_X(t):=\exp(tX)$ is the boundary of some disc $D_X$; and $D_X$ projects to some sphere $S_X$ in $G/T$.
Using Stokes theorem, we obtain that
%\begin{align*}
%\int_{S_X}\omega_{\xi}&=\int_{D_X}d\widetilde{\xi}=\int_{\gamma_X}\widetilde{\xi}=\int_{0}^1\xi\circ d l_{\exp(-tX)}(\frac{d}{ds}(\exp(sX))_{|s=t})dt=\\
%&=\int_{0}^1\xi(\frac{d}{ds}(\exp((s-t)X))_{|s=t})dt=\xi(X)\neq 0.
%\end{align*}
\[\int_{S_X}\omega_{\xi}=\int_{D_X}d\widetilde{\xi}=\int_{\gamma_X}\widetilde{\xi}=\int_{0}^1\xi(\frac{d}{ds}(\exp((s-t)X))_{|s=t})dt=\xi(X)\neq 0.\]
This shows that $\omega_{\xi}$ is nontrivial in cohomology, and finishes the proof.
\end{proof}

An element $\sigma\in Aut(\mathfrak{g},\mathfrak{t})$ integrates to a Lie group isomorphism of $G$, denoted by the same symbol, which satisfies
$\sigma(T)=T$. Therefore it induces a diffeomorphism $\overline{\sigma}$ of $G/T$. This diffeomorphism has the following property:

\begin{lemma}\label{Lemma_sigma}
We have that $\overline{\sigma}^{*}(\omega_{\xi})=\omega_{\sigma^*(\xi)}$.
\end{lemma}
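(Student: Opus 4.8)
The plan is to establish the identity $\overline{\sigma}^{*}(\omega_{\xi})=\omega_{\sigma^*(\xi)}$ by pulling everything back to $G$, where the forms are given explicitly by the left-invariant extensions $\widetilde{\xi}$, and then using that cohomology classes on $G/T$ are detected injectively by the map $\xi\mapsto[\omega_\xi]$ from Lemma \ref{Lemma_1}. First I would recall the commutative square relating the projection $p:G\to G/T$ and the induced maps: since $\sigma$ is a Lie group automorphism with $\sigma(T)=T$, the diagram
\[
\begin{CD}
G @>\sigma>> G\\
@VpVV @VVpV\\
G/T @>\overline{\sigma}>> G/T
\end{CD}
\]
commutes, i.e.\ $p\circ\sigma=\overline{\sigma}\circ p$. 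Because $p^*$ is injective on $H^2$ (indeed $\omega_\xi$ is \emph{defined} as the unique form with $p^*\omega_\xi=d\widetilde{\xi}$), it suffices to verify the equality after pulling back along $p$, that is, to show $p^*\overline{\sigma}^*\omega_\xi=p^*\omega_{\sigma^*\xi}$.

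Next I would compute both sides on $G$. Using the commuting square, $p^*\overline{\sigma}^*\omega_\xi=\sigma^*p^*\omega_\xi=\sigma^*(d\widetilde{\xi})=d(\sigma^*\widetilde{\xi})$, while the right-hand side is $p^*\omega_{\sigma^*\xi}=d\widetilde{(\sigma^*\xi)}$ by definition. So the lemma reduces to the $1$-form identity $\sigma^*\widetilde{\xi}=\widetilde{(\sigma^*\xi)}$ on $G$, at least up to an exact form (and in fact I expect exact equality). The key point here is that $\widetilde{\xi}\in\Omega^1(G)$ is the \emph{left-invariant} extension of $\xi\in\mathfrak{g}^*=T_e^*G$, and $\sigma$ is a group automorphism hence fixes the identity $e$ and respects left translations: for $L_g$ denoting left translation, $\sigma\circ L_g=L_{\sigma(g)}\circ\sigma$. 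From this one reads off that $\sigma^*$ of a left-invariant form is again left-invariant, with value at $e$ given by $(d_e\sigma)^*$ applied to the original value. Since $\sigma^*\xi$ is precisely $\xi\circ d_e\sigma=(d_e\sigma)^*\xi$ (the notation in the lemma), both $\sigma^*\widetilde{\xi}$ and $\widetilde{(\sigma^*\xi)}$ are the left-invariant $1$-forms with the same value $(d_e\sigma)^*\xi$ at $e$, hence they coincide on all of $G$.

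The main obstacle, and the only genuinely substantive step, is pinning down this last compatibility between $\sigma^*$ on $1$-forms and the action $\xi\mapsto\sigma^*\xi$ on $\mathfrak{g}^*$, being careful with the direction of the pullback and the role of left- versus right-invariance. Concretely I must check that left-invariance of $\widetilde{\xi}$ is preserved under $\sigma^*$; this uses only that $\sigma$ intertwines left translations, which holds because $\sigma$ is a homomorphism. Once that is verified, evaluating at $e$ gives the identification of the constant. I expect no difficulty from the $G/T$ side, since by construction $\omega_\xi$ descends uniquely and $p^*$ is injective on forms (being a surjective submersion), so the equality of pullbacks along $p$ immediately yields $\overline{\sigma}^*\omega_\xi=\omega_{\sigma^*\xi}$ as forms on $G/T$, not merely in cohomology. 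I would therefore phrase the final conclusion in terms of forms, which is stronger than and implies the cohomological statement needed later.
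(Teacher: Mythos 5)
Your proposal is correct and follows essentially the same route as the paper: the paper's proof is exactly the pointwise computation $\sigma^*(\widetilde{\xi})(X)=\xi(dl_{\sigma(g)^{-1}}\circ d\sigma(X))=\xi(d\sigma\circ dl_{g^{-1}}(X))=\widetilde{\sigma^*(\xi)}(X)$, which is your key identity $\sigma^*\widetilde{\xi}=\widetilde{\sigma^*\xi}$ obtained from $l_{\sigma(g)^{-1}}\circ\sigma=\sigma\circ l_{g^{-1}}$. The reduction to $G$ via the commuting square and injectivity of $p^*$, which you spell out, is left implicit in the paper but is the same argument.
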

\begin{proof}
Using that $l_{\sigma(g)^{-1}}\circ \sigma=\sigma \circ l_{g^{-1}}$, the following computation implies the result:
\[\sigma^*(\widetilde{\xi})(X)=\xi(dl_{\sigma(g)^{-1}}\circ d\sigma(X))=\xi(d\sigma\circ d l_{g^{-1}}(X))=\widetilde{\sigma^*(\xi)}(X), \ \forall  X\in T_gG.\qedhere\]
\end{proof}

Every diffeomorphism of $G/T$ induces an algebra automorphism of $H^{\bullet}(G/T)$, and the possible outcomes are covered by the maps
$\overline{\sigma}$. This follows from Theorem 1.2 in \cite{Papadima}, the conclusion we state below (for a self contained exposition see
\cite{teza}).

\begin{proposition}\label{Proposition_1}
For every diffeomorphism $\varphi:G/T\to G/T$, there exists $\sigma\in  Aut(\mathfrak{g},\mathfrak{t})$, such that $\overline{\sigma}:G/T\to
G/T$ induces the same map on $H^{\bullet}(G/T)$ as $\varphi$, i.e.
\[\varphi^*=\overline{\sigma}^*:H^{\bullet}(G/T)\rmap H^{\bullet}(G/T).\]
\end{proposition}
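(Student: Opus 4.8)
The plan is to reduce everything to the action on $H^2(G/T)$ and then to invoke the rigidity of the integral cohomology ring. First I would recall Borel's description \cite{Borel}: $H^\bullet(G/T;\mathbb{R})$ is the coinvariant algebra $S(\mathfrak{t}^*)/I$, where $I$ is the ideal generated by the $W$-invariant polynomials of positive degree, and $\mathfrak{t}^*$ sits in degree two via the identification of Lemma \ref{Lemma_1}. The crucial consequence is that $H^\bullet(G/T)$ is generated as a ring in degree two, so any graded ring automorphism — in particular $\varphi^*$ for a diffeomorphism $\varphi$ — is completely determined by its restriction $B:=\varphi^*|_{H^2}\in GL(\mathfrak{t}^*)$.

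The heart of the argument is to show that $B$ lies in $Aut(\Phi)$, regarded as a subgroup of $GL(\mathfrak{t}^*)$ through Lemma \ref{Lemma_1}. Granting this, the proof concludes quickly. By Lemma \ref{Lemma_sigma} the action of $\overline{\sigma}$ on $H^2\cong\mathfrak{t}^*$ is $\xi\mapsto \sigma^*(\xi)=\tau(\sigma)^{-1}(\xi)$, and by Lemma \ref{Lemma_3} the map $\tau$ is onto $Aut(\Phi)$; hence the assignment $\sigma\mapsto\overline{\sigma}^*|_{H^2}$ runs over all of $Aut(\Phi)$ as $\sigma$ runs over $Aut(\mathfrak{g},\mathfrak{t})$. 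I can therefore choose $\sigma$ with $\overline{\sigma}^*|_{H^2}=B=\varphi^*|_{H^2}$, and since the ring is generated in degree two this forces $\overline{\sigma}^*=\varphi^*$ on all of $H^\bullet(G/T)$.

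To prove $B\in Aut(\Phi)$ I would proceed in two stages. Since $\varphi^*$ is a ring automorphism it preserves $I$, hence preserves the one-dimensional space $I_2=\langle p_2\rangle$ spanned by the invariant quadratic form; thus $B$ rescales $p_2$ and is a nonzero multiple $c\cdot O$ of an orthogonal transformation $O$ of $(\mathfrak{t}^*,p_2)$. Preservation of $I$ in the remaining degrees then forces the orthogonal part to respect the invariant theory of $W$, i.e.\ to lie in the normalizer $N_O(W)$, so that $B\in\mathbb{R}^{*}\cdot N_O(W)$. The second, essential stage is integrality: because $\varphi$ is a diffeomorphism, $\varphi^*$ preserves the integral lattice $H^2(G/T;\mathbb{Z})$ (the weight lattice of $T$) together with the integral ring structure. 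A finite-order-plus-lattice argument then removes the scalar, giving $c=\pm1$, and cuts $N_O(W)$ down to those transformations preserving the weight lattice, which is exactly $Aut(\Phi)$.

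I expect this last stage to be the main obstacle, and it is precisely the content imported from Theorem 1.2 of \cite{Papadima} (see \cite{teza} for a self-contained account). The point is that working over $\mathbb{R}$ alone does not suffice: the real coinvariant algebra can admit graded ring automorphisms of no geometric origin — for instance, for $\mathfrak{g}$ of type $C_2$ the rotation by $\pi/4$ normalizes $W$ and preserves the real invariant ring, yet does not belong to $Aut(\Phi)$. What rules such maps out is that they fail to preserve the weight lattice, so the genuinely delicate step is to extract enough rigidity from the integral cohomology ring (equivalently, from the fact that $\varphi$ is an honest diffeomorphism) to land inside $Aut(\Phi)$ rather than merely inside $\mathbb{R}^{*}\cdot N_O(W)$.
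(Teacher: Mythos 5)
Your proposal is correct and follows essentially the same route as the paper: both arguments rest on exactly the same ingredients, namely Theorem 1.2 of \cite{Papadima} to pin down the full group of graded automorphisms of $H^{\bullet}(G/T,\mathbb{Z})$ as $Aut(\Phi)$, and Lemmas \ref{Lemma_1}, \ref{Lemma_sigma}, \ref{Lemma_3} to realize every element of $Aut(\Phi)$ geometrically as some $\overline{\sigma}^*$. The only difference is presentational --- the paper exhibits $|Aut(\Phi)|$ distinct automorphisms $\overline{\sigma}_i^*$ and concludes by counting, whereas you identify $\varphi^*|_{H^2}$ directly as an element of $Aut(\Phi)$ and match it (your two-stage real/integral sketch of \emph{why} it lands in $Aut(\Phi)$ is, as you yourself note, precisely the content you are importing from Papadima, so the logical dependence is identical).
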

\begin{proof}
By Lemma \ref{Lemma_3}, we can choose $k:=|Aut(\Phi)|$ elements $\sigma_1,\ldots,\sigma_k\in Aut(\mathfrak{g},\mathfrak{t})$, such that
$Aut(\Phi)=\{(\sigma_{1})_{|\mathfrak{t}^*}^*,\ldots,(\sigma_{k})_{|\mathfrak{t}^*}^*\}$. By Lemma \ref{Lemma_sigma}, we have that
$\overline{\sigma}_i^*(\omega_{\xi})=\omega_{\sigma_i^*(\xi)}$, and since the map $\xi\mapsto [\omega_{\xi}]$ is an isomorphism (Lemma
\ref{Lemma_1}), it follows that $\overline{\sigma}_1^*,\ldots,\overline{\sigma}_k^*$ have different actions on $H^{\bullet}(G/T)$. Now, by
Theorem 1.2 in \cite{Papadima} the group of graded automorphisms of $H^{\bullet}(G/T,\mathbb{Z})$ is isomorphic to the group $Aut(\Phi)$, so it
has $k$ elements, and this implies that the $\sigma_i$'s cover all the possible such automorphisms. This finishes the proof.
\end{proof}

The following consequence won't be used in the proof of Theorem \ref{Theorem_1}.

\begin{corollary}\label{Corollary_moduli_on_coadjoint}
The map $\xi\mapsto \omega_{\xi}$ induces a bijection between $\mathfrak{t}^{*}_\mathrm{reg}/Aut(\Phi)$ and an
open in the moduli space of all symplectic structures on $G/T$.
\end{corollary}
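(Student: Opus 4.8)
The plan is to show that the assignment $\xi\mapsto\omega_\xi$, restricted to the regular part $\mathfrak{t}^{*\mathrm{reg}}$, lands in the space $\mathcal{S}$ of symplectic forms on $G/T$ (for regular $\xi$ the orbit $O_\xi$ has maximal dimension, so $\omega_\xi$ is nondegenerate), that it descends through the quotient $\mathcal{M}:=\mathcal{S}/\mathrm{Diff}(G/T)$ to a well-defined \emph{injective} map on $\mathfrak{t}^{*\mathrm{reg}}/Aut(\Phi)$, and that the image of this map is \emph{open} in $\mathcal{M}$. Throughout I identify $H^2(G/T)\cong\mathfrak{t}^*$ via the linear isomorphism $\xi\mapsto[\omega_\xi]$ of Lemma \ref{Lemma_1}; under this identification $\mathfrak{t}^{*\mathrm{reg}}$ is open, so $\{[\omega_\xi]:\xi\in\mathfrak{t}^{*\mathrm{reg}}\}$ is open in $H^2(G/T)$.

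For well-definedness and injectivity I argue cohomologically. If $\xi'=\sigma^*\xi$ with $\sigma\in Aut(\mathfrak{g},\mathfrak{t})$, then Lemma \ref{Lemma_sigma} gives $\overline\sigma^*\omega_\xi=\omega_{\xi'}$, so $\omega_\xi$ and $\omega_{\xi'}$ are symplectomorphic; since by Lemma \ref{Lemma_3} the $Aut(\Phi)$-orbits on $\mathfrak{t}^*$ coincide with the orbits of $\{\sigma^*:\sigma\in Aut(\mathfrak{g},\mathfrak{t})\}$, the map descends to $\mathfrak{t}^{*\mathrm{reg}}/Aut(\Phi)$. Conversely, suppose $\varphi^*\omega_{\xi'}=\omega_\xi$ for some diffeomorphism $\varphi$. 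By Proposition \ref{Proposition_1} there is $\sigma\in Aut(\mathfrak{g},\mathfrak{t})$ with $\varphi^*=\overline\sigma^*$ on $H^\bullet(G/T)$, whence $[\omega_{\sigma^*\xi'}]=\overline\sigma^*[\omega_{\xi'}]=\varphi^*[\omega_{\xi'}]=[\omega_\xi]$; the injectivity in Lemma \ref{Lemma_1} forces $\sigma^*\xi'=\xi$, so $\xi$ and $\xi'$ lie in one $Aut(\Phi)$-orbit. This is exactly the place where Proposition \ref{Proposition_1} is used to replace an arbitrary diffeomorphism by the finite group $Aut(\Phi)$.

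The main obstacle is openness of the image, and here I would invoke the Moser/local-moduli statement recalled in the introduction: for the compact symplectic manifold $(G/T,\omega_{\xi_0})$ there is a neighborhood $\mathcal{N}$ of $\omega_{\xi_0}$ in $\mathcal{S}$ and an open $V\ni[\omega_{\xi_0}]$ in $H^2(G/T)$ such that the period map $\omega\mapsto[\omega]$ identifies $\mathcal{N}/\mathrm{Diff}_0$ with $V$; in particular two forms in $\mathcal{N}$ with the same cohomology class are symplectomorphic. Since $q(A)$ is open in $\mathcal{M}$ exactly when its saturation $q^{-1}(q(A))=\mathrm{Diff}(G/T)\cdot A$ is open in $\mathcal{S}$, it suffices to prove that $\mathcal{O}:=\mathrm{Diff}(G/T)\cdot\{\omega_\xi:\xi\in\mathfrak{t}^{*\mathrm{reg}}\}$ is open in $\mathcal{S}$, and by $\mathrm{Diff}$-invariance it is enough to show $\mathcal{O}$ is a neighborhood of each $\omega_{\xi_0}$. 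Shrinking $V$ so that $V\subset\{[\omega_\xi]:\xi\in\mathfrak{t}^{*\mathrm{reg}}\}$ and so that for each $\eta\in V$ the unique (and, since $\omega_\xi$ depends linearly on $\xi$, continuously varying) regular $\xi$ with $[\omega_\xi]=\eta$ satisfies $\omega_\xi\in\mathcal{N}$, I set $\mathcal{N}':=\mathcal{N}\cap\{\omega:[\omega]\in V\}$. For $\omega'\in\mathcal{N}'$ we have $[\omega']=[\omega_\xi]$ for such a regular $\xi$, and $\omega',\omega_\xi\in\mathcal{N}$ share a class, hence are symplectomorphic; so $\omega'\in\mathcal{O}$ and $\mathcal{N}'\subset\mathcal{O}$. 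Thus $\mathcal{O}$ is open, its image $q(\mathcal{O})$ is the desired open subset of $\mathcal{M}$, and by the previous paragraph $\xi\mapsto\omega_\xi$ induces a bijection of $\mathfrak{t}^{*\mathrm{reg}}/Aut(\Phi)$ onto it, completing the proof.
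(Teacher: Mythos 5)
Your proof is correct and follows essentially the same route as the paper: the ``symplectomorphic iff related by $Aut(\Phi)$'' equivalence via Proposition \ref{Proposition_1}, Lemma \ref{Lemma_sigma} and Lemmas \ref{Lemma_1}, \ref{Lemma_3}, and openness of the saturated image via the Moser argument. You have merely spelled out the details (in particular the Moser step) that the paper's three-sentence proof leaves implicit.
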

\begin{proof}
First, Proposition \ref{Proposition_1}, Lemma \ref{Lemma_sigma} and Lemma \ref{Lemma_3} imply that for
$\xi_1,\xi_2\in\mathfrak{t}^{*}_{\mathrm{reg}}$, we have that $\omega_{\xi_1}$ and $\omega_{\xi_2}$ are
symplectomorphic, if and only if $\xi_1=f(\xi_2)$ for some $f\in Aut(\Phi)$. This shows that the map $\xi\mapsto
\omega_{\xi}$ induces a bijection
\[\Theta:\mathfrak{t}^{*}_{\mathrm{reg}}/Aut(\Phi)\rmap \mathcal{S}/\mathrm{Diff}(G/T),\]
where $\mathcal{S}$ denotes the space of all symplectic form on $G/T$ which are symplectomorphic to one of the type
$\omega_{\xi}$, for some $\xi\in \mathfrak{t}^{*}_{\mathrm{reg}}$. The Moser argument implies that
$\mathcal{S}$ is $C^0$-open in the space of all symplectic forms.
\end{proof}

\section{Proof of part $(b)$ of Theorem \ref{Theorem_1}}
In Poisson geometric terms, $\mathfrak{g}^{*}_{\mathrm{reg}}$ is described as the regular part of
$(\mathfrak{g}^*,\pi_{\mathfrak{g}})$, i.e.\ the open subset consisting of leaves of maximal dimension. The regular
part of $\mathbb{S}(\mathfrak{g}^*)$ is
$\mathbb{S}(\mathfrak{g}^*)_{\mathrm{reg}}=\mathfrak{g}^{*}_{\mathrm{reg}}\cap\mathbb{S}(\mathfrak{g}^*)$.
Let $\mathfrak{c}\subset \mathfrak{t}^*$ be an open Weyl chamber and denote by
$\mathbb{S}(\mathfrak{c}):=\mathfrak{c}\cap \mathbb{S}(\mathfrak{g}^*)$. From (\ref{EQ_regular_part}) it
follows that $\mathbb{S}(\mathfrak{g}^*)_{\mathrm{reg}}$ is described by the diffeomorphism
\[\Psi: G/T\times \mathbb{S}(\mathfrak{c})\diffto \mathbb{S}(\mathfrak{g}^{*})_{\mathrm{reg}}, \ \ \Psi([g],\xi):=Ad_{g}^{\dagger}(\xi),\]
and the symplectic leaves correspond to the slices $(G/T\times \{\xi\},\omega_{\xi})$, $\xi\in\mathfrak{c}$.

\begin{proof}[Proof of part (b) of Theorem \ref{Theorem_1}]
Let $\phi:(\mathbb{S}(\mathfrak{g}^*),f\pi_{\mathbb{S}})\rmap (\mathbb{S}(\mathfrak{g}^*),g\pi_{\mathbb{S}})$
be a Poisson diffeomorphism, where $f,g$ are positive Casimirs. Now, the symplectic leaves of $f\pi_{\mathbb{S}}$
and $g\pi_{\mathbb{S}}$ are also the coadjoint orbits $O_{\xi}$, for $\xi\in \mathbb{S}(\mathfrak{g}^*)$, but with
symplectic structures $1/f(\xi)\omega_{\xi}$, respectively $1/g(\xi)\omega_{\xi}$. In particular, they have the same
regular part $\mathbb{S}(\mathfrak{g}^*)_{\mathrm{reg}}$. So, after conjugating with $\Psi$, the Poisson
diffeomorphism on the regular parts takes the form
\[\Psi^{-1}\circ \phi\circ \Psi: (G/T\times \mathbb{S}(\mathfrak{c}), \Psi^*(f\pi_{\mathbb{S}}))\diffto (G/T\times \mathbb{S}(\mathfrak{c}), \Psi^*(g\pi_{\mathbb{S}})),\]
\[(x,\xi)\mapsto (\phi_{\xi}(x),\theta(\xi)), \]
for a diffeomorphism $\theta:\mathbb{S}(\mathfrak{c})\diffto \mathbb{S}(\mathfrak{c})$ and a symplectomorphism
\[\phi_{\xi}:(G/T,\omega_{\xi/f(\xi)})\diffto (G/T,\omega_{\theta(\xi)/g(\theta(\xi))}).\]
Since $\mathbb{S}(\mathfrak{c})$ is connected it follows that the maps $\phi_{\xi}$ for $\xi\in
\mathbb{S}(\mathfrak{c})$ are isotopic to each other, so they induces the same map on $H^2(G/T)$ and, by
Proposition \ref{Proposition_1}, this map is also induced by a diffeomorphism $\overline{\sigma}$ corresponding to
some $\sigma\in Aut(\mathfrak{g},\mathfrak{t})$. Lemma \ref{Lemma_sigma} implies the following equality in
$H^2(G/T)$ for all $\xi\in \mathbb{S}(\mathfrak{c})$:
\[[\omega_{\xi/f(\xi)}]=[\phi_{\xi}^*(\omega_{\theta(\xi)/g(\theta(\xi))})]=[\overline{\sigma}^*(\omega_{\theta(\xi)/g(\theta(\xi))})]=[\omega_{\sigma^*(\theta(\xi)/g(\theta(\xi)))}].\]
Using Lemma \ref{Lemma_1} we obtain that $\xi/f(\xi)=\sigma^*(\theta(\xi))/g(\theta(\xi))$. Since $\sigma^*$
preserves the norm, we get that $f(\xi)=g(\theta(\xi))$. This shows that $\xi=\sigma^*(\theta(\xi))$, so $\sigma^*$
preserves $\mathbb{S}(\mathfrak{c})$ and, on this space, $\theta=(\sigma^{-1})^*$. So $f\circ \sigma^*(\xi)=g(\xi)$
for all $\xi\in \mathbb{S}(\mathfrak{c})$. Since the regular leaves are dense and all hit $\mathbb{S}(\mathfrak{c})$,
and since both $f\circ \sigma^*$ and $g$ are Casimirs, it follows that $f\circ \sigma^*=g$.
\end{proof}

\section{The space of Casimirs}\label{Section_the_space_of_Casimirs}

By the main theorem, the map which associates to $F\in
\mathfrak{Casim}(\mathbb{S}(\mathfrak{g}^*),\pi_{\mathbb{S}})$ the Poisson structure $e^{F}\pi_{\mathbb{S}}$
on $\mathbb{S}(\mathfrak{g}^*)$ induces a parametrization of an open in the Poisson moduli space of
$\mathbb{S}(\mathfrak{g}^*)$ around $\pi_{\mathbb{S}}$ by the space
\[\mathfrak{Casim}(\mathbb{S}(\mathfrak{g}^*),\pi_{\mathbb{S}})/Out(\mathfrak{g}).\]
In this section we describe this space using classical invariant theory.

Let $P[\mathfrak{g}^*]$ and $P[\mathfrak{t}^*]$ denote the algebras of polynomials on $\mathfrak{g}^*$ and
$\mathfrak{t}^*$ respectively. A classical result (see e.g. Theorem 7.3.5 \cite{Dixmier}) states that the restriction
map $P[\mathfrak{g}^*]\to P[\mathfrak{t}^*]$ induces an isomorphism between the algebras of invariants
\begin{equation}\label{EQ_Invariant_Polynomials}
P[\mathfrak{g}^*]^G\cong P[\mathfrak{t}^*]^W.
\end{equation}
A theorem of Schwarz \cite{Schwarz} extends this result to the smooth setting
\begin{equation}\label{EQ_Invariant_Functions}
C^{\infty}(\mathfrak{g}^*)^G\cong C^{\infty}(\mathfrak{t}^*)^W.
\end{equation}
To explain this, first recall that the algebra $P[\mathfrak{g}^*]^G$ is generated by $l:=\mathrm{dim}(\mathfrak{t})$
algebraically independent homogeneous polynomials $p_1,\ldots,p_l$ (Theorem 7.3.8 \cite{Dixmier}). Hence, by
(\ref{EQ_Invariant_Polynomials}), $P[\mathfrak{t}^*]^W$ is generated by
$q_1:=p_{1|\mathfrak{t}^*},\ldots,q_l:=p_{l|\mathfrak{t}^*}$. Consider the maps
\[p=(p_1,\ldots,p_l):\mathfrak{g}^*\rmap \mathbb{R}^{l},\  \textrm{and}\ q=(q_1,\ldots,q_l):\mathfrak{t}^*\rmap \mathbb{R}^{l},\]
and denote by $\Delta:=p(\mathfrak{g}^*)$. Since the inclusion $\mathfrak{t}^*\subset \mathfrak{g}^*$ induces a
bijection between the $W$-orbits and the $G$-orbits, it follows that $q(\mathfrak{t}^*)=\Delta$. The theorem of
Schwarz \cite{Schwarz} applied to the action of $G$ on $\mathfrak{g}^*$ and to the action of $W$ on
$\mathfrak{t}^*$, shows that the pullbacks by $p$ and $q$ give isomorphisms between (hence we obtain
(\ref{EQ_Invariant_Functions}))
\begin{equation}\label{EQ_Invariant_Functions2}
C^{\infty}(\mathfrak{g}^*)^G\cong C^{\infty}(\Delta), \ \ C^{\infty}(\mathfrak{t}^*)^W\cong C^{\infty}(\Delta).
\end{equation}
Schwarz's result asserts that $p$, respectively $q$, induce homeomorphisms between the orbit spaces and $\Delta$
\begin{equation}\label{EQ_Leaf_space_homeomorphisms}
\mathfrak{g}^*/G\cong \Delta\cong \mathfrak{t}^*/W.
\end{equation}
We can describe the orbit space also using an open Weyl chamber $\mathfrak{c}\subset \mathfrak{t}^*$.

\begin{lemma}\label{Lemma_2}
The map $q:\overline{\mathfrak{c}}\to \Delta$ is a homeomorphism and restricts to a diffeomorphism between the
interiors $q:\mathfrak{c}\to \mathrm{int}(\Delta)$.
\end{lemma}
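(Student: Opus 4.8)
The plan is to deduce both assertions from the two facts already established: that the closed chamber $\overline{\mathfrak{c}}$ is a strict fundamental domain for the $W$-action on $\mathfrak{t}^*$, and that $q$ induces the homeomorphism $\mathfrak{t}^*/W\cong\Delta$ of (\ref{EQ_Leaf_space_homeomorphisms}). For the homeomorphism statement I would start from the standard fact that every $W$-orbit in $\mathfrak{t}^*$ meets $\overline{\mathfrak{c}}$ in exactly one point (see \cite{Knapp}), so that the restriction to $\overline{\mathfrak{c}}$ of the quotient projection $\pi:\mathfrak{t}^*\to\mathfrak{t}^*/W$ is a continuous bijection, with image all of $\mathfrak{t}^*/W$. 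Since $W$ is finite, $\pi$ is a closed map: for $C\subseteq\mathfrak{t}^*$ closed, $\pi^{-1}(\pi(C))=\bigcup_{w\in W}w\cdot C$ is a finite union of closed sets, hence closed, so $\pi(C)$ is closed. As $\overline{\mathfrak{c}}$ is itself closed in $\mathfrak{t}^*$, the restriction $\pi|_{\overline{\mathfrak{c}}}$ is a closed continuous bijection, hence a homeomorphism onto $\mathfrak{t}^*/W$. Post-composing with the homeomorphism $\mathfrak{t}^*/W\cong\Delta$ of (\ref{EQ_Leaf_space_homeomorphisms}) yields that $q:\overline{\mathfrak{c}}\to\Delta$ is a homeomorphism.

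For the second assertion, the first point is that $q|_{\mathfrak{c}}$ is a local diffeomorphism. Here I would invoke the classical computation (Chevalley) that, for a system $q_1,\dots,q_l$ of \emph{algebraically independent homogeneous} generators of $P[\mathfrak{t}^*]^W$ — which is exactly what we have — the Jacobian $\det(\partial q_i/\partial x_j)$ equals, up to a nonzero scalar, the product of the positive roots. Consequently it vanishes precisely on the union of the reflection walls, i.e.\ on the non-regular locus. Since the open chamber $\mathfrak{c}$ consists of regular elements, the differential of $q$ is invertible throughout $\mathfrak{c}$. Being moreover injective, as the restriction of the homeomorphism $q|_{\overline{\mathfrak{c}}}$, the map $q|_{\mathfrak{c}}$ is a diffeomorphism onto the open set $q(\mathfrak{c})\subseteq\mathbb{R}^l$.

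It then remains to identify $q(\mathfrak{c})$ with $\mathrm{int}(\Delta)$. One inclusion is immediate: $q(\mathfrak{c})$ is an open subset of $\mathbb{R}^l$ contained in $\Delta$, hence $q(\mathfrak{c})\subseteq\mathrm{int}(\Delta)$. For the reverse inclusion I would apply invariance of domain to the \emph{continuous} injection $(q|_{\overline{\mathfrak{c}}})^{-1}$ restricted to the open set $\mathrm{int}(\Delta)\subseteq\mathbb{R}^l$: its image is open in $\mathbb{R}^l$ and contained in $\overline{\mathfrak{c}}$, hence contained in $\mathrm{int}(\overline{\mathfrak{c}})=\mathfrak{c}$, the last equality holding because a Weyl chamber is an open convex cone and therefore coincides with the interior of its closure. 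Applying $q$ gives $\mathrm{int}(\Delta)\subseteq q(\mathfrak{c})$, and the two inclusions together finish the proof, with $q|_{\mathfrak{c}}:\mathfrak{c}\to\mathrm{int}(\Delta)$ a smooth bijective local diffeomorphism, hence a diffeomorphism.

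The step I expect to be the main obstacle is precisely this last identification of the image, since $q$ collapses the walls and its behaviour on $\partial\overline{\mathfrak{c}}$ is genuinely singular, so one cannot match interiors by a naive differential argument. The clean way out is to use a purely topological tool — invariance of domain applied to the merely continuous inverse homeomorphism — which transports $\mathrm{int}(\Delta)$ into $\mathfrak{c}$ without having to analyze the boundary stratification of $\Delta$ by hand. The Jacobian identity is the other essential input, and before using it I would make sure the hypotheses (algebraic independence and homogeneity of the $q_i$, already recorded) are in force.
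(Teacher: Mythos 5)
Your proof is correct, and on the key step it takes a genuinely different route from the paper. For the homeomorphism you argue via closedness of the finite-group quotient map, while the paper argues via properness of $q|_{\overline{\mathfrak{c}}}$ (using that $q:\mathfrak{t}^*\to\mathbb{R}^l$ is proper, thanks to $q_1=|\xi|^2$); these are essentially interchangeable soft arguments. The real divergence is in showing that $q|_{\mathfrak{c}}$ is a local diffeomorphism: you quote the classical Chevalley identity that the Jacobian of a system of algebraically independent homogeneous generators of $P[\mathfrak{t}^*]^W$ is, up to a nonzero scalar, the product of the positive roots, hence nonvanishing on the open chamber. The paper instead gives a soft, computation-free argument staying entirely within the tools it has already set up: for a nonzero $V\in T_\xi\mathfrak{c}$ it takes a compactly supported bump function $\chi$ on $\mathfrak{c}$ with $d\chi_\xi(V)\neq 0$, extends it $W$-invariantly to $\mathfrak{t}^*$, writes the extension as $h\circ q$ via Schwarz's theorem (\ref{EQ_Invariant_Functions2}), and differentiates to conclude $d_\xi q(V)\neq 0$. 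Your route imports an external classical fact but yields more (the exact vanishing locus of the Jacobian); the paper's route is self-contained modulo Schwarz. Finally, your invariance-of-domain argument identifying $q(\mathfrak{c})$ with $\mathrm{int}(\Delta)$ (using $\mathrm{int}(\overline{\mathfrak{c}})=\mathfrak{c}$ for the open convex cone $\mathfrak{c}$) makes explicit a point the paper's proof leaves implicit — it only establishes that $q|_{\mathfrak{c}}$ is an injective immersion and does not spell out why the image is exactly the interior of $\Delta$ — so this is a welcome addition rather than a deviation.
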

\begin{proof}
It is well known that $\overline{\mathfrak{c}}$ intersects each orbit of $W$ exactly once (see e.g.\ \cite{DK}) and so,
by (\ref{EQ_Leaf_space_homeomorphisms}), the map is a bijection. Since $q:t^*\to \mathbb{R}^l$ is proper, it follows
that also $q_{|\overline{c}}$ is proper, and this implies the first part. We are left to check that $q_{|\mathfrak{c}}$
is an immersion. Let $V\in T_\xi\mathfrak{c}$ be a nonzero vector. Consider $\chi$ a smooth, compactly supported
function on $\mathfrak{c}$ satisfying $d\chi_{\xi}(V)\neq 0$. Since the action gives a homeomorphism $W\times
\mathfrak{c}\cong \mathfrak{t}^{*}_{\mathrm{reg}}$, $\chi$ has a unique $W$-invariant extension to
$\mathfrak{t}^*$, which is defined on $w\mathfrak{c}$ by $\widetilde{\chi}=w^*(\chi)$, and extended by zero on
$\mathfrak{t}^*\backslash \mathfrak{t}^{*}_{\mathrm{reg}}$. Then, by (\ref{EQ_Invariant_Functions2}),
$\widetilde{\chi}$ is of the form $\widetilde{\chi}=h\circ q$, for some $h\in C^{\infty}(\Delta)$. Differentiating in the
direction of $V$, we obtain that $d_{\xi}q(V)\neq 0$, and this finishes the proof.
\end{proof}

The polynomials $p_1,\ldots,p_l$ are not unique; a necessary and sufficient condition for a set of homogeneous
polynomials to be such a generating system is that their image in $I/I^2$ forms a basis, where $I\subset
P[\mathfrak{g}^*]^G$ denotes the ideal of polynomials vanishing at $0$. Since $I^2$ is $Out(\mathfrak{g})$
invariant, it is easy to see that we can choose $p_1,\ldots,p_l$ such that $p_1(\xi)=|\xi|^2$ and the linear span of
$p_2,\ldots, p_l$ is $Out(\mathfrak{g})$ invariant. This choice endows $\mathbb{R}^l$ with a linear action of
$Out(\mathfrak{g})$, for which $p$ is $Aut(\mathfrak{g})$ equivariant. Moreover, the action is trivial on the first
component and $\{0\}\times\mathbb{R}^{l-1}$ is invariant. The isomorphism $Aut(\Phi)/W\cong Out(\mathfrak{g})$
from Lemma \ref{Lemma_3} shows that also $q$ is equivariant with respect to the actions of $Aut(\Phi)$ and
$Out(\mathfrak{g})$. Thus we have isomorphisms between the spaces
\[C^{\infty}(\mathfrak{g}^*)^G/Aut(\mathfrak{g})\cong C^{\infty}(\mathfrak{t}^*)^W/Aut(\Phi)\cong C^{\infty}(\Delta)/Out(\mathfrak{g}).\]

Notice that every Casimir $f$ on $\mathbb{S}(\mathfrak{g}^*)$ can be extended to a $G$-invariant smooth function on $\mathfrak{g}^*$, %; define its extension as follows $\widetilde{f}(\xi):=\varphi(|\xi|)f(\xi/|\xi|)$, where $\varphi$ is a smooth function on $\mathbb{R}$ which vanishes around $0$, and $\varphi(1)=1$. This shows that
therefore
\[\mathfrak{Casim}(\mathbb{S}(\mathfrak{g}^*),\pi_{\mathbb{S}})\cong C^{\infty}(\mathfrak{g}^*)^G_{|\mathbb{S}(\mathfrak{g}^*)}.\]
Since $p_1(\xi)=|\xi|^2$, it follows that $p(\mathbb{S}(\mathfrak{g}^*))=\left(\{1\}\times
\mathbb{R}^{l-1}\right)\cap \Delta$. Denoting by $p':=(p_2,\ldots,p_l):\mathfrak{g}^*\to \mathbb{R}^{l-1}$ and by
$\Delta':=p'(\mathbb{S}(\mathfrak{g}^*))$, we have that $C^{\infty}(\Delta')=C^{\infty}(\Delta)_{|\{1\}\times
\Delta'}$. Lemma \ref{Lemma_2} implies that $q':=p'_{|\mathbb{S}(\mathfrak{t}^*)}$ is a homeomorphism between
$\mathbb{S}(\overline{\mathfrak{c}})\cong \Delta'$, which restricts to a diffeomorphism between
$\mathbb{S}(\mathfrak{c})\cong \mathrm{int}(\Delta')$. This shows that $\Delta'=\overline{B}$, where $B$ is a
bounded open, diffeomorphic to an open ball.

With these, we have the following description of the Casimirs.

\begin{corollary}
The polynomial map $p':\mathfrak{g}^*\to \mathbb{R}^{l-1}$ is equivariant with respect to the actions of
$Aut(\mathfrak{g})$ and $Out(\mathfrak{g})$, and $q':=p'_{|\mathfrak{t}^*}$ is equivariant with respect to the
actions of $Aut(\Phi)$ and $Out(\mathfrak{g})$. These maps induce isomorphisms between
\[\mathfrak{Casim}(\mathbb{S}(\mathfrak{g}^*),\pi_{\mathbb{S}})/Out(\mathfrak{g})\cong C^{\infty}(\mathbb{S}(\mathfrak{t}^*))^W/Out(\mathfrak{g})\cong C^{\infty}(\Delta')/Out(\mathfrak{g}),\]
and $Out(\mathfrak{g})$-equivariant homeomorphisms between the spaces
\[\mathbb{S}(\mathfrak{g}^*)/G\cong \mathbb{S}(\mathfrak{t}^*)/W\cong \Delta'.\]
\end{corollary}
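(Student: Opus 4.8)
The plan is to assemble the ingredients built up in this section rather than to prove anything new: the corollary consolidates the Schwarz isomorphisms $(\ref{EQ_Invariant_Functions2})$, the homeomorphisms $(\ref{EQ_Leaf_space_homeomorphisms})$, Lemma $\ref{Lemma_2}$, and the equivariance of $p$ and $q$, all read off on the unit sphere, where the first invariant $p_1=|\cdot|^2$ is frozen at the value $1$. Accordingly I would split the proof into three checks: equivariance of $p'$ and $q'$, the orbit-space homeomorphisms, and the function-space isomorphisms.

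For the equivariance, recall that $\mathbb{R}^l$ was equipped with a linear $Out(\mathfrak{g})$-action making $p$ equivariant for $Aut(\mathfrak{g})$; the action factors through $Out(\mathfrak{g})$ precisely because inner automorphisms fix every $G$-invariant polynomial. By the choice of the generators, $\langle p_1\rangle$ and $\langle p_2,\dots,p_l\rangle$ are invariant with trivial action on the first, so the splitting $\mathbb{R}^l=\mathbb{R}\times\mathbb{R}^{l-1}$ is by invariant subspaces and the projection onto the last $l-1$ coordinates is equivariant. Hence $p'$, being this projection composed with $p$, is $(Aut(\mathfrak{g}),Out(\mathfrak{g}))$-equivariant; restricting to $\mathfrak{t}^*$ and invoking $Aut(\Phi)/W\cong Out(\mathfrak{g})$ from Lemma $\ref{Lemma_3}$ gives the corresponding statement for $q'$.

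For the orbit spaces, I would restrict the homeomorphisms $\mathfrak{g}^*/G\cong\Delta\cong\mathfrak{t}^*/W$ to the unit spheres. Since $p_1=q_1\equiv 1$ there, $p$ and $q$ land in the slice $\{1\}\times\Delta'$, and they descend to continuous bijections $\mathbb{S}(\mathfrak{g}^*)/G\to\Delta'$ and $\mathbb{S}(\mathfrak{t}^*)/W\to\Delta'$ whose fibres are exactly the $G$- and $W$-orbits; for the second one may alternatively use that $\mathbb{S}(\overline{\mathfrak{c}})$ is a fundamental domain for $W$ on $\mathbb{S}(\mathfrak{t}^*)$ and that $q'$ maps it homeomorphically onto $\Delta'$ by Lemma $\ref{Lemma_2}$. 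As the spheres are compact and $\Delta'\subset\mathbb{R}^{l-1}$ is Hausdorff, these continuous bijections are homeomorphisms, and they are $Out(\mathfrak{g})$-equivariant by the previous step.

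Finally, for the function spaces I would restrict the Schwarz isomorphisms to the sphere. Combining $\mathfrak{Casim}(\mathbb{S}(\mathfrak{g}^*),\pi_{\mathbb{S}})\cong C^{\infty}(\mathfrak{g}^*)^G_{|\mathbb{S}(\mathfrak{g}^*)}$ with $C^{\infty}(\mathfrak{g}^*)^G\cong C^{\infty}(\Delta)$ and the slice identity $C^{\infty}(\Delta')=C^{\infty}(\Delta)_{|\{1\}\times\Delta'}$ yields $\mathfrak{Casim}(\mathbb{S}(\mathfrak{g}^*),\pi_{\mathbb{S}})\cong C^{\infty}(\Delta')$ through $p'$, and the analogous restriction of $C^{\infty}(\mathfrak{t}^*)^W\cong C^{\infty}(\Delta)$ gives $C^{\infty}(\mathbb{S}(\mathfrak{t}^*))^W\cong C^{\infty}(\Delta')$ through $q'$. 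All these identifications are $Out(\mathfrak{g})$-equivariant, so passing to $Out(\mathfrak{g})$-quotients produces the asserted chain of isomorphisms. There is no real obstacle here, the work having been done earlier; the only step meriting care is the passage of Schwarz's \emph{smooth} isomorphism to the sphere, i.e.\ verifying that $C^{\infty}(\Delta)_{|\{1\}\times\Delta'}$ really is $C^{\infty}(\Delta')$ with the correct smooth structure and exhausts the Casimirs of $\mathbb{S}(\mathfrak{g}^*)$, which is exactly the point isolated in the paragraph preceding the statement.
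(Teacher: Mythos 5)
Your proposal is correct and follows essentially the same route as the paper, which states this corollary without a separate proof precisely because it is a summary of the preceding discussion in Section \ref{Section_the_space_of_Casimirs}: the equivariant choice of generators, the Schwarz isomorphisms (\ref{EQ_Invariant_Functions2}) and (\ref{EQ_Leaf_space_homeomorphisms}), Lemma \ref{Lemma_2}, and the observation that Casimirs of $\pi_{\mathbb{S}}$ extend to $G$-invariant functions on $\mathfrak{g}^*$. You also correctly isolate the one step meriting care --- that $C^{\infty}(\Delta)_{|\{1\}\times\Delta'}=C^{\infty}(\Delta')$ exhausts the Casimirs of the sphere --- which is exactly the point the paper settles in the paragraph preceding the statement.
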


\section{The case of $\mathfrak{su}(3)$.}

In this section, we describe our result for the Lie algebra $\mathfrak{g}=\mathfrak{su}(3)$, whose 1-connected Lie
group is $G=\mathbf{SU}(3)$. Recall that
\[\mathfrak{su}(3)=\{A\in M_3(\mathbb{C})| A+A^*=0, \ tr(A)=0\}, \]
\[  \mathbf{SU}(3)=\{U\in M_3(\mathbb{C})| UU^*=I, \ det(U)=1\}.\]
We use the invariant inner product given by the negative of the trace form $(A,B):=-tr(AB)$. Let $\mathfrak{t}$ be the
space of diagonal matrices in $\mathfrak{su}(3)$
\[\mathfrak{t}:=\left\{D(ix_1,ix_2,ix_3):=\left(
                                               \begin{array}{ccc}
                                                 ix_1 & 0 & 0 \\
                                                 0 & ix_2 & 0 \\
                                                 0 & 0 & ix_3 \\
                                               \end{array}
                                             \right) | \ x_j\in\mathbb{R},\ \sum_j x_j=0 \right\}.\]
The corresponding maximal torus is
\[T:=\{D(e^{i\theta_1},e^{i\theta_2},e^{i\theta_3})\ |\ \theta_j\in\mathbb{R},\ \prod_j e^{i\theta_j}=1\}.\]
The Weyl group is $W=S_3$. It acts on $\mathfrak{t}$ as follows
\[\sigma D(ix_1,ix_2,ix_3)=D(ix_{\sigma(1)},ix_{\sigma(2)},ix_{\sigma(3)}), \ \sigma\in S_3.\]
The Dynkin diagram of $\mathfrak{su}(3)$ is $A_2$ (a graph with one edge), so its symmetry group is
$\mathbb{Z}_2$. A generator of $Out(\mathfrak{su}(3))$ is complex conjugation
\[\gamma\in Aut(\mathfrak{su}(3),\mathfrak{t}),\ \gamma(A)=\overline{A}.\]
On $\mathfrak{t}$, $\gamma$ acts by multiplication with $-1$.

Under the identification of $\mathfrak{t}\cong\mathfrak{t}^*$ given by the inner product, the invariant polynomials
$P[\mathfrak{t}]^{S_3}$ are generated by the symmetric polynomials
\[q_1(D(ix_1,ix_2,ix_3))=x_1^2+x_2^2+x_3^2, \ \ q_2(D(ix_1,ix_2,ix_3))=\sqrt{6}(x_1^3+x_2^3+x_3^3).\]
Identifying also $\mathfrak{su}(3)\cong\mathfrak{su}^*(3)$, $q_1$ and $q_2$ are the restriction to $\mathfrak{t}$ of
the invariant polynomials $p_1$, $p_2\in P[\mathfrak{su}^*(3)]^{\mathbf{SU}(3)}$ (which generate
$P[\mathfrak{su}^*(3)]^{\mathbf{SU}(3)}$)
\[ p_1(A)=-tr(A^2), \ \ \ p_2(A)=i\sqrt{6}tr(A^3).\]
Clearly $p_2\circ\gamma=-p_2$. The inner product on $\mathfrak{t}$ is
\[(D(ix_1,ix_2,ix_3),D(ix'_1,ix'_2,ix'_3))=x_1x_1'+x_2x_2'+x_3x_3',\]
and we have that $\mathbb{S}(\mathfrak{t}^*)\cong \mathbb{S}(\mathfrak{t})$ is a circle, isometrically
parameterized by
\[A(\theta):= \frac{cos(\theta)}{\sqrt{2}} D\left(i,-i,0\right)+\frac{sin(\theta)}{\sqrt{6}}D\left(i,i,-2i\right), \  \theta\in [0,2\pi].\]
In polar coordinates on $\mathfrak{t}$, the polynomials $q_1$ and $q_2$ become
\[q_1(rA(\theta))=r^2, \ \ q_2(rA(\theta))=r^3 sin(3\theta).\]
This implies that the space $\Delta$ is given by
\[\Delta=\{(r^2, r^3 sin(3\theta)) | r\geq 0, \ \theta\in [0,2\pi] \}=\{(x,y)\in \mathbb{R}^2 | x^3\geq y^2\}.\]
The map $q:=(q_1,q_2):\mathfrak{t}\to \mathbb{R}^2$, restricted to the open Weyl chamber
\[\mathfrak{c}:=\{rA(\theta)| r>0, \ \theta\in (-\pi/6,\pi/6 )\},\]
is a diffeomorphism onto $\mathrm{int}(\Delta)$. The linear action of $\mathbb{Z}_2=Out(\mathfrak{su}(3))$ on
$\mathbb{R}^2$, for which $q$ is equivariant, is multiplication by $-1$ on the second component. Therefore
$q':=q_2$ is a $\mathbb{Z}_2$-equivariant homeomorphism between
\[q':\mathbb{S}(\overline{\mathfrak{c}})=\{A(\theta) | \theta\in [-\pi/6,\pi/6]\}\diffto \Delta':=[-1,1],\]
which restricts to a diffeomorphism between the interiors.

We conclude that the Poisson moduli space of the 7-dimensional sphere $\mathbb{S}(\mathfrak{su}(3)^*)$ is
parameterized around $\pi_{\mathbb{S}}$ by the space
\[C^{\infty}([-1,1])/\mathbb{Z}_2,\]
where $\mathbb{Z}_2$ acts on $C^{\infty}([-1,1])$ by the involution
\[\gamma(f)(x)=f(-x), \ \ f\in C^{\infty}([-1,1]).\]

\bibliographystyle{amsplain}
\def\lllll{}

\vspace*{.3in}
\end{document}